\newcommand{\baseRing}[1]{\ensuremath{\mathbb{#1}}}
\newcommand{\Z}{\baseRing{Z}}
\newcommand{\R}{\baseRing{R}}
\newcommand{\C}{\baseRing{C}}
\newcommand{\pd}[2]{{\frac{\partial {#1}}{\partial {#2}}}}
\newcommand{\transpose}[1]{{#1}^t}
\newcommand{\jdef}[1]{\emph{#1}}
\newcommand{\idM}{\ensuremath{\baseRing{I}}}
\newcommand{\stext}[1]{\ensuremath{\quad \text{#1}\quad}}
\theoremstyle{plain}
\newtheorem{theorem}{Theorem}
\newtheorem{lemma}{Lemma}
\newtheorem{prop}{Proposition}
\theoremstyle{definition}
\newtheorem{defn}{Definition}
\newtheorem{example}{Example}
\newtheorem{remark}{Remark}
\DeclareMathOperator{\im}{Im}
\DeclareMathOperator{\gr}{Gr}
\DeclareMathOperator{\vspan}{Span}
\newcommand{\ie}{\textsl{i.e.}}
\newcommand{\del}{\ensuremath{\partial}}
\newcommand{\ti}[1]{\tilde{#1}}
\def\conj{\overline}
\def\GC{\ensuremath{G_\C}}
\def\GR{\ensuremath{G_\R}}
\def\jgk{\ensuremath{\mathfrak{k}}}
\def\jgp{\ensuremath{\mathfrak{p}}}
\def\gl{\ensuremath{\mathfrak{gl}}}
\def\jgsl{\ensuremath{\mathfrak{sl}}}
\def\lg{\ensuremath{\mathfrak{g}}}
\def\ga{\ensuremath{\mathfrak{a}}}
\def\lgc{\ensuremath{\mathfrak{g}_\C}}
\def\lgr{\ensuremath{\mathfrak{g}_\R}}
\def\DD{\ensuremath{\mathcal{D}}}
\def\DC{\ensuremath{\check{\mathcal{D}}}}
\def\IVI{IVI\xspace}
\def\IVIs{IVIs\xspace}
\begin{document}

\title{Infinitesimal Variations of Hodge Structure\\ at Infinity}

\bibliographystyle{amsplain}

\author{Javier Fernandez} \address{Instituto Balseiro\\ Universidad
  Nacional de Cuyo -- C.N.E.A.\\ Bariloche \\R8402AGP \\Rep\'ublica Argentina}
\email{jfernand@ib.edu.ar}

\author{Eduardo Cattani}
\address{Department of Mathematics\\ University of Massachusetts\\ Amherst\\  
  MA 01003-9305\\USA}
\email{cattani@math.umass.edu}

%\date{\today}

\begin{abstract}
  By analyzing the local and infinitesimal behavior of degenerating
  polarized variations of Hodge structure the notion of infinitesimal
  variation of Hodge structure at infinity is introduced. It is shown
  that all such structures can be integrated to polarized variations
  of Hodge structure and that, conversely, all are limits of
  infinitesimal variations of Hodge structure (IVHS) at finite
  points. As an illustration of the rich information encoded in this
  new structure, some instances of the maximal dimension problem for
  this type of infinitesimal variation are presented and contrasted
  with the ``classical'' case of IVHS at finite points.
\end{abstract}

%\subjclass{Primary 32G20; Secondary 14C30, 14D07, 32J25 }

\maketitle

\section{Introduction}
\label{sec:introduction}

It is a well known fact in mathematics that most of the interesting
features of a map are encoded in its singular behavior. The Hodge
theoretic version of the previous statement is that the interesting
features of a polarized variation of Hodge structure (PVHS) can be
described by analyzing its degenerating behavior. The purpose of this
note is to start the exploration of the first order behavior of PVHS,
from the perspective of a degenerating point, that is, a point at
infinity.

J. Carlson, M. Green, P. Griffiths, and J. Harris introduced
in~\cite{ar:CGGH-IVHS} the idea of infinitesimal variation of Hodge
structure (IVHS) as a way of associating to a PVHS an object with
interesting linear algebraic invariants. Basically, if we represent
locally a PVHS as an integral manifold of Griffiths' exterior
differential system as in~\cite{ar:CGGH-IVHS}
or~\cite{ar:Mayer-coupled}, then an IVHS is an integral element of the
differential system.

Since IVHS are useful in the analysis of PVHS, we introduce a similar
notion associated to degenerating PVHS. Infinitesimal variations of
Hodge structure at infinity (\IVI) are introduced in
Definition~\ref{def:ivi}.  This idea is already implicit in the
construction of compactifications in~\cite{ar:CCK} as well as
in~\cite{ar:KU-compactification}.

Using the local description of PVHS near
infinity~\cite{ar:S-vhs,ar:CK-luminy,ar:CF-asymptotics} we are able to
prove in Theorem~\ref{th:var_to_abel} that every \IVI integrates to a
PVHS and that every \IVI is a limit of IVHS at infinity.

We claim that \IVIs encode more refined information of a PVHS than an
IVHS does for finite points. In order to illustrate this statement we
look into the maximal dimension problem for \IVIs, and contrast this
case with the results known for IVHS.

Given the weight and Hodge numbers of a PVHS, J. Carlson, A. Kasparian
and D. Toledo in~\cite{ar:CKT} and R. Mayer in~\cite{ar:Mayer-coupled}
find sharp upper bounds for the maximal dimension of an IVHS, which we
call the CKTM bounds. Even though the CKTM bounds remain valid and
sharp for \IVIs if one considers all possible nilpotent orbits on a
given period domain, considering only \IVIs with underlying particular
mixed Hodge structures or nilpotent cones leads to lower maximal
dimensions, corresponding to the stronger control that the nilpotent
orbit imposes on the possible \IVIs.

Finally, we see that, in some cases, for a given nilpotent orbit, there
are non-conjugate \IVIs of maximal dimension, a phenomenon that
doesn't occur for IVHS due to rigidity.

\smallskip

Section~\ref{sec:preliminaries} briefly reviews some results in
asymptotic Hodge theory and infinitesimal variations of Hodge
structure. Section~\ref{sec:IVIs} introduces the notion of
infinitesimal variation of Hodge structure at infinity and studies the
integrability of such objects. Section~\ref{sec:abelian_subalgebras}
explores some properties of the maximal dimension problem for \IVIs.

%%%%%%%%%%%%%%%%%%%%%%%%%%%%%%%%%%%%%%%%%%%%%%%%%%%%%%%%%%%%

\section{Preliminaries}
\label{sec:preliminaries}

The study of the degenerating behavior of variations of Hodge
structure is the result of the work of P. Griffiths, P. Deligne,
W. Schmid, E. Cattani and A. Kaplan, among others. We refer
to~\cite{ar:CK-luminy} for a description of the subject as well as
references to the original papers.

We consider a finite dimensional $\R$-vector space $V_\R$ and its
complexification $V=\C \otimes V_\R$ with the induced conjugation
$v\mapsto \conj{v}$. A \jdef{(real) Hodge structure} (HS) of weight
$k$ on $V_\R$ is defined by a grading $H^{*,k-*}$ of $V$ subject to
the conditions
\begin{equation*}
  V = \oplus_a H^{a,k-a} \stext{ and } H^{a,k-a}= \conj{H^{k-a,a}}
  \stext{ for all } a.
\end{equation*}
The numbers $h^{a,k-a} = \dim H^{a,k-a}$ are called the \jdef{Hodge
  numbers} of the structure. The subspaces $F^a = \oplus_{b\geq a}
H^{b,k-b}$ form a decreasing filtration of $V$. Conversely, given such
a filtration subject to $V = F^a \oplus \conj{F^{k-a+1}}$ for all $a$,
the subspaces $H^{a,k-a} = F^a\cap \conj{F^{k-a}}$ define a HS of
weight $k$ on $V_\R$.

A \jdef{polarized Hodge structure} (PHS) of weight $k$ on $V_\R$ is
given by a HS of weight $k$ on $V_\R$, $H^{*,k-*}$, and a
nondegenerate bilinear form $Q$ on $V$ defined over $\R$, such that:
\begin{enumerate}
\item $Q(u,v)= (-1)^k Q(v,u)$ for all $u,v\in V$,
\item \label{it:orthogonality} $Q(H^{a,k-a},H^{b,k-b})= 0$ if $a+b\neq
  k$ and
\item \label{it:positivity} the Hermitian form $Q(C_{H} \cdot ,
  \conj{\cdot})$ is positive definite, where $C_{H} v = i^{a-b} v$ for
  $v \in H^{a,b}$.
\end{enumerate}

If $V$ is the vector space underlying a PHS with form $Q$, we denote
by $\GC=O(V,Q)$ the group of isometries and by $\GR$ the subgroup of
$\GC$ preserving $V_\R$. The Lie algebras of $\GC$ and $\GR$ are
denoted by $\lgc$ and $\lgr$ respectively. 

All PHS of weight $k$ and fixed Hodge numbers are parametrized by a
space denoted by $\DD$ (see~\cite[\S 3]{ar:S-vhs}). In order to
describe $\DD$ we consider the space of all flags of $V$ of
appropriate dimension that satisfy the orthogonality
condition~(\ref{it:orthogonality}). This is a subvariety of the
corresponding flag manifold; it is called the \jdef{compact dual
  space} of $\DD$ and is denoted by $\DC$. $\GC$ acts transitively on
$\DC$, so that $\DC\simeq \GC/B$ where $B\subset \GC$ is a parabolic
subgroup.  The space $\DD \subset\DC$ corresponds to all flags that
satisfy, in addition to~(\ref{it:orthogonality}), the positivity
condition~(\ref{it:positivity}). $\DD$ is an open subspace and $\GR$
acts transitively on $\DD$.

A filtration $F^*\in\DC$ defines a filtration of $\lgc$ by $F^a\lgc
=\{X\in \lgc: X(F^b)\subset F^{a+b}\}$. When $F^*\in\DD$, it also
defines a Hodge structure of weight $0$ on $\lgr$ with the grading of
$\lgc$ given by 
\begin{equation}
  \label{eq:grading_of_lgc}
  \lgc^{s,-s} = \{ X\in \lgc: X(H^{a,k-a})\subset H^{a+s,k-a-s}\},
\end{equation}
where $H^{*,k-*}$ is the grading associated to $F^*$.

A \jdef{mixed Hodge structure} (MHS) on $V_\R$ consists of a pair of
filtrations of $V$, $(W_*, F^*)$, $W_*$ real and increasing, $F^*$
decreasing, such that $F^*$ induces a HS of weight $a$ on
$\gr_a^{W_*}$ for each $a$.

Given any nilpotent $N\in\gl(V)$, there is a filtration $W(N)_*$ of $V$ called
its \jdef{weight filtration} (see~\cite[page 468]{ar:CKS}). This
filtration is the unique increasing filtration that satisfies
$N(W_l)\subset W_{l-2}$ and $N^l:\gr_{l}^{W_*}\rightarrow
\gr_{-l}^{W_*}$ is an isomorphism.

A \jdef{polarized mixed Hodge structure}
(PMHS)~\cite[1.16]{ar:CK-luminy} of weight $k$ on $V_\R$ consists of a
MHS $(W_*,F^*)$ on $V_\R$, a nilpotent element $N\in (F^{-1}\lgc \cap
\lgr)$ and a nondegenerate bilinear form $Q$ such that
\begin{enumerate}
\item $N^{k+1}=0$,
\item $W_* = (W(N)[-k])_*$, where $W[-k]_j = W_{j-k}$,
\item $Q(F^a,F^{k-a+1}) = 0$ and,
\item the HS of weight $k+l$ induced by $F^*$ on
  $\ker(N^{l+1}:\gr_{k+l}^{W_*}\rightarrow \gr_{k-l-2}^{W_*})$ is
  polarized by $Q(\cdot,N^l \cdot)$.
\end{enumerate}

A polarized variation of HS (PVHS)~\cite[Section 1]{ar:CK-luminy} over
a manifold $M$ determines a holomorphic map $\Phi:M\rightarrow
\DD/\Gamma$, where $\Gamma\subset \GC$ is a discrete subgroup; $\Phi$
is called the \jdef{period mapping}. The map $\Phi$ is also locally
liftable and horizontal.  Horizontality in this context means that
$\im d\Phi$ is contained in the $\GC$-homogeneous subbundle $T_h\DC
\subset T\DC$ with fiber, over $F^*\in\DC$, given by
$F^{-1}\lgc/F^0\lgc \subset \lgc/F^0\lgc$; $T_h\DC$ is called the
\jdef{horizontal bundle}. When $F^*\in\DD$, $(T_h\DC)_{F^*} =
\lgc^{-1,1}$ as defined by~\eqref{eq:grading_of_lgc}.

Our main interest is the asymptotic behavior of $\Phi$ near the
boundary of $M$, with respect to some compactification $\overline{M}$
where $\overline{M}-M$ is a divisor with normal crossings. Such
compactifications exist if, for instance, $M$ is
quasiprojective. Locally at infinity we may as well replace $M$ by
some product of punctured discs and discs, so that
\begin{equation*}
  \Phi:(\Delta^*)^r \times \Delta^m\rightarrow \DD/\Gamma.
\end{equation*}
Since the $r$th-power of the upper half plane, $U^r$ is the universal
cover of $(\Delta^*)^r$, we can lift $\Phi$ to $U^r\times \Delta^m$.
We still refer to this induced map by $\Phi$. We denote by $z=(z_j)$,
$t=(t_l)$ and $s=(s_j)$ the coordinates on $U^r$, $\Delta^m$ and
$(\Delta^*)^r$ respectively. By definition, we have $s_j = e^{2\pi i
  z_j}$.

A \jdef{nilpotent orbit} is a horizontal map
\begin{equation*}
  \theta:\C^r\rightarrow \DC,\, \theta(z) = \exp(\sum_{j=1}^r z_j
  N_j)\cdot F^*
\end{equation*}
where $F^*\in\DC$, $\{N_1,\ldots,N_r\} \subset (F^{-1}\lgc\cap \lgr)$ is a
commuting subset of nilpotent elements and there is $\alpha\in\R$ such
that $\theta(z)\in\DD$ for $\im(z_j) > \alpha$. We usually denote a
nilpotent orbit by $\{N_1,\ldots,N_r;F^*\}$ and the cone
$C(N_1,\ldots,N_r) = \{\sum \lambda_j N_j: \lambda_j \in\R_{>0}\}$ is
called the \jdef{nilpotent cone} of the orbit. Even if nilpotent
orbits are analytic objects by definition, they can be algebraically
characterized as asserted by the following result (\cite[Theorem
2.3]{ar:CK-luminy}) that, in turn, puts together important results of
several authors.

\begin{theorem} \label{th:2.3}
  If $\{N_1,\ldots,N_r;F^*\}$ is a nilpotent orbit, then
  \begin{enumerate}
  \item \label{it:2.3.1} $N_j^{k+1} = 0$ where $k$ is the weight of
    the PHS in $\DD$.
  \item \label{it:2.3.2} Every $N\in C(N_1,\ldots,N_r)$ defines the same weight
    filtration $W^C_*$.
  \item \label{it:2.3.3} $((W^C[-k])_*,F^*)$ is a PMHS, polarized by
    every $N\in C(N_1,\ldots,N_r)$.
  \end{enumerate}
  Conversely, if $F^* \in \DC$, and $\{N_1,\ldots,N_r\}$ are commuting
  nilpotent elements of $F^{-1}\lgc \cap \lgr$ that satisfy the
  conditions~\ref{it:2.3.1}, \ref{it:2.3.2} and~\ref{it:2.3.3} for
  some $N\in C(N_1,\ldots,N_r)$, then $\{N_1,\ldots,N_r;F^*\}$ is a
  nilpotent orbit.
\end{theorem}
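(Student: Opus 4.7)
\smallskip
\emph{Overall strategy.} My plan is to reduce both directions to the one-variable case, namely Schmid's nilpotent orbit theorem together with his SL$_2$-orbit theorem, and to build the multi-variable statement by packaging compatibilities along each ray in $C(N_1,\ldots,N_r)$ and passing to the generators $N_j$ via a limiting argument. The several-variable SL$_2$-orbit theorem of Cattani--Kaplan--Schmid is the analytic workhorse that allows the estimates from a single nilpotent to be made uniform across the cone.

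\smallskip
\emph{Forward direction.} For~(\ref{it:2.3.1}) I would observe that for any $N\in C(N_1,\ldots,N_r)$ the map $z\mapsto\exp(zN)\cdot F^*$, obtained by restricting $\theta$ to the real line spanned by the coefficients of $N$, is a one-variable nilpotent orbit in $\DD$; Schmid's classical theorem then gives $N^{k+1}=0$. Letting the parameters approach the rays $\R_{>0}N_j$ inside the closed cone and using that nilpotency of a fixed order is a closed condition yields $N_j^{k+1}=0$. For~(\ref{it:2.3.2}) I would invoke the Cattani--Kaplan invariance result: any two elements of a cone that polarize a common $F^*$ share the same weight filtration. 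Starting from one $N$ in the interior of the cone and perturbing by commuting nilpotents in $C(N_1,\ldots,N_r)$, an induction on the number of generators combined with the rigidity of the Jacobson--Morozov splitting shows that $W(N)$ does not change. For~(\ref{it:2.3.3}), once $W^C$ is fixed, Schmid's SL$_2$-orbit theorem applied to one generic $N$ in the interior produces the PMHS $((W^C[-k])_*,F^*)$ polarized by that $N$; the openness of the polarizing condition on the cone then propagates polarization to every $N\in C(N_1,\ldots,N_r)$.

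\smallskip
\emph{Converse direction.} Assume $F^*\in\DC$ and commuting nilpotents $N_j\in F^{-1}\lgc\cap\lgr$ satisfy~(\ref{it:2.3.1})--(\ref{it:2.3.3}) for some $N$ in the cone. Horizontality of $\theta(z)=\exp(\sum z_jN_j)\cdot F^*$ is automatic from $N_j\in F^{-1}\lgc$, so the only content is that $\theta(z)\in\DD$ for $\im(z_j)\gg 0$. My plan is to use the PMHS to produce, via Deligne's canonical bigrading of the mixed Hodge structure, an explicit splitting of $V$ compatible with the $\mathfrak{sl}_2$-triple attached to one $N$ in the interior of the cone. On each $W$-graded piece the PMHS polarization gives Hodge--Riemann positivity as a baseline, and the SL$_2$-orbit theorem provides the asymptotic estimates for the Hodge metric that allow one to verify the positivity condition~(\ref{it:positivity}) directly on $\theta(z)$ for $\im(z_j)$ large enough. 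An induction on $r$ handles the interaction of commuting nilpotents and produces the required polyhedral neighborhood of infinity on which $\theta$ takes values in $\DD$.

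\smallskip
\emph{Main obstacle.} The hardest step is the converse, specifically matching the splittings associated to commuting nilpotents coherently enough that positivity estimates coming from a single $N$ extend uniformly to a neighborhood of infinity in all $r$ directions. In one variable this is exactly Schmid's SL$_2$-orbit theorem; in several variables it requires the full several-variable SL$_2$-orbit theorem of Cattani--Kaplan--Schmid, which is where the substantive analytic work sits. The forward direction, by contrast, is largely a bookkeeping exercise once the one-variable nilpotent orbit theorem and the Cattani--Kaplan invariance of $W(N)$ are in hand.
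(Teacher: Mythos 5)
This theorem is not proved in the paper: it is quoted verbatim as \cite[Theorem 2.3]{ar:CK-luminy}, with the explicit remark that it ``puts together important results of several authors.'' The constituent results are Schmid's nilpotent orbit and $SL(2)$-orbit theorems \cite{ar:S-vhs}, the Cattani--Kaplan theorem that all elements of the open cone $C(N_1,\ldots,N_r)$ of a nilpotent orbit define one weight filtration, and the several-variable $SL(2)$-orbit theorem of \cite{ar:CKS}, whose Theorem 4.66 is essentially the converse direction. Your sketch identifies exactly these ingredients and assembles them in the standard order, so as a roadmap it is sound and consistent with what the quoted sources actually do; there is no ``paper's proof'' to diverge from.

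Two caveats on the details. First, in item~(\ref{it:2.3.3}) your appeal to ``openness of the polarizing condition on the cone'' is not the right mechanism: openness at one point does not propagate a property to the entire cone. The standard argument is more direct --- each individual $N\in C(N_1,\ldots,N_r)$ yields a one-variable nilpotent orbit $\{N;F^*\}$ by restricting $\theta$ to the corresponding ray, Schmid's one-variable theory shows that $(W(N)[-k]_*,F^*)$ is a PMHS polarized by that $N$, and item~(\ref{it:2.3.2}) identifies $W(N)_*$ with $W^C_*$; no openness or connectedness argument is needed. Second, the converse for $r>1$ is where essentially all of the analytic content lives: your ``induction on $r$'' is precisely the inductive structure of the several-variable $SL(2)$-orbit theorem, so at that point the proposal is a citation rather than an argument. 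That is acceptable in context, since the paper itself treats the whole theorem as a known result, but you should be aware that nothing in the sketch would let you reconstruct the converse without the full force of \cite{ar:CKS}.
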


By Schmid's Nilpotent Orbit Theorem~\cite[4.12]{ar:S-vhs}, there is
a nilpotent orbit $\{N_1,\ldots,N_r;F^*\}$ associated to any
degenerating PVHS $\Phi$; in this case, $N_j$ is the logarithm of the
unipotent part of the monodromy. 

In order to make the relationship between period mappings and their
nilpotent orbits more precise, recall that there is a canonical
bigrading $\{I^{*,*}\}$ associated with any MHS $(W_*,F^*)$. It is
uniquely characterized by the property $I^{p,q}\equiv \conj{I^{q,p}}
\mod(\oplus_{a<p,b<q} I^{a,b})$ (see~\cite[2.13]{ar:CKS}). This
bigrading induces, in turn, a bigrading $I^{*,*}\lgc$ of
$(W_*\lgc,F^*\lgc)$.

Set
\begin{equation}
  \label{eq:def_pa}
  \jgp_a =\oplus_{q}I^{a,q}\lgc \qquad \text{ and } \qquad \lg_- = \oplus_{a\leq
    -1}\jgp_a.
\end{equation}
It is immediate that if $(W_*,F^*)$ is a MHS, $(T_h\DC)_{F^*} =
F^{-1}\lgc/F^0\lgc \simeq \jgp_{-1}$. Also, $\lgc=\lg_- \oplus
Stab_{\GC} (F^*)$, so that $(\lg_-, X \mapsto \exp(X)\cdot F^*)$
provides a local model for the $\GC$-homogeneous space $\DC$ near
$F^*$. We recall from~\cite[Section 2]{ar:CK-luminy} that we can
represent a degenerating PVHS $\Phi$ by
\begin{equation}
  \label{eq:phi_g}
  \Phi(z,t) = \exp\bigg(\sum_{j=1}^r z_j N_j\bigg)\cdot \exp(\Gamma(\exp(2\pi i
  z),t)) \cdot F^*
\end{equation}
where $(N_1,\ldots,N_r;F^*)$ is the nilpotent orbit and
$\Gamma:\Delta^r\times \Delta^m\rightarrow \lg_-$ is holomorphic. It
is possible to rewrite 
\begin{equation}
  \label{eq:phi_x}
  \Phi(z,t) = \exp(X(z,t))\cdot F^*
\end{equation}
for a holomorphic $X:U^r\times \Delta^m\rightarrow \lg_-$. In particular, 
\begin{equation}
  \label{eq:X-1_gamma}
  X_{-1}(z,t) = \sum_{j=1}^r z_j N_j +
  \Gamma_{-1}(\exp(2\pi i z),t),  
\end{equation}
where the $-1$ subscript denotes the $\jgp_{-1}$ part of the
corresponding application.

In terms of $X$, recalling the $\GC$-homogeneity of $T_h\DC$, the
horizontality of $\Phi$ is expressed by:
\begin{equation*}
  \exp(-X)\, d\exp(X) \in \jgp_{-1}\otimes T^*(U^r\times \Delta^m).
\end{equation*}
In fact, since the $\jgp_{-1}$-part of $\exp(-X)\, d\exp(X)$ is
$dX_{-1}$, the horizontality condition can be written as 
\begin{equation}
  \label{eq:horiz}
  \exp(-X)\, d\exp(X) = dX_{-1}.
\end{equation}
It follows from this last expression that
\begin{equation}
  \label{eq:integcond}
  dX_{-1}\wedge dX_{-1}\ =\ 0.
\end{equation}

The following result, which follows from
\cite[Theorem~2.8]{ar:CK-luminy} and
\cite[Theorem~2.7]{ar:CF-asymptotics}, shows that the nilpotent orbit
together with the $\jgp_{-1}$-valued holomorphic function
$\Gamma_{-1}$ completely determine the local behavior of the
variation:

\begin{theorem}\label{th:improved_2.8}
  Let $\{N_1,\ldots,N_r;F_0\}$ be a nilpotent orbit and $R:\Delta^r
  \times \Delta^m \rightarrow \jgp_{-1}$ be a holomorphic map with
  $R(0,0)=0$.  Define $X_{-1}(z,t) = \sum_{j=1}^r z_j N_j+R(s,t)$,
  $s_j = e^{2\pi i z_j}$, and suppose that the differential equation
  \eqref{eq:integcond} holds.  Then, there exists a unique period
  mapping
  \begin{equation*}
    \Phi(s,t) \ =\ \exp\biggl(\frac{1}{2\pi i} \sum_{j=1}^r \log(s_j)
    N_j\biggr)\cdot \exp(\Gamma(s,t))\cdot F_0,
  \end{equation*}
  defined in a neighborhood of the origin in $\Delta^{r+m}$ such that
  $\Gamma_{-1} = R$.
\end{theorem}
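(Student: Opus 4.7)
The plan is to work in the local chart $X \mapsto \exp(X) \cdot F_0$ on $\DC$, with $X$ valued in $\lg_-$. Given the prescribed $X_{-1}(z,t) = \sum_{j=1}^r z_j N_j + R(s,t)$, I would construct a holomorphic map $X : U^r \times \Delta^m \to \lg_-$ whose $\jgp_{-1}$-component is $X_{-1}$ and which satisfies the horizontality condition \eqref{eq:horiz}. Then $\Phi := \exp(X) \cdot F_0$ is the candidate period mapping, and applying Baker--Campbell--Hausdorff recasts it as $\exp(\sum_j z_j N_j) \exp(\Gamma(z,t)) \cdot F_0$ for some $\Gamma$ whose $\jgp_{-1}$-component is $R$.

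For the construction, set $g = \exp(X)$ and $\omega = dX_{-1}$. Then equation \eqref{eq:horiz} reads $g^{-1} dg = \omega$, a Frobenius-type system for $g$ in the unipotent group $\exp(\lg_-)$. The integrability condition $d\omega + \omega \wedge \omega = 0$ for such a system reduces, since $d\omega = d^2 X_{-1} = 0$, to exactly \eqref{eq:integcond}. On the contractible polydisk $U^r \times \Delta^m$ there is then a unique $g$ solving $g^{-1} dg = \omega$ with $g(0,0) = \mathrm{id}$; comparing $\jgp_{-1}$-components in the expansion
\begin{equation*}
  g^{-1}\, dg \ =\ \frac{1 - e^{-\ad X}}{\ad X}\, dX
\end{equation*}
shows that the $\jgp_{-1}$-part of $\log g$ coincides with $X_{-1}$, as required. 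This is essentially the content of Theorem~2.8 of \cite{ar:CK-luminy}.

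The subtler point is to verify that, after factoring $\exp(X) = \exp(\sum_j z_j N_j) \exp(\Gamma)$ via Baker--Campbell--Hausdorff, the resulting $\Gamma$ is in fact a holomorphic function of $(s,t) \in \Delta^{r+m}$ extending across $s=0$. One proceeds by induction on the $\jgp_a$-components: $\Gamma_{-1} = R(s,t)$ is manifestly such, and each higher component is determined from lower ones through brackets in which $\sum_j z_j N_j$ enters only via $\ad N_j$, which is nilpotent, so the polynomial $z_j$ dependence truncates at a fixed order. Controlling the resulting power series expansion in $s$ and $t$ across the boundary is where the estimates from Theorem~2.7 of \cite{ar:CF-asymptotics} enter.

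Finally, $\Phi$ lands in $\DD$ near infinity because the nilpotent orbit hypothesis supplies $\alpha \in \R$ with $\exp(\sum_j z_j N_j) \cdot F_0 \in \DD$ for $\im z_j > \alpha$, and $\exp(\Gamma(s,t)) \cdot F_0 \to F_0$ as $(s,t) \to 0$; openness of $\DD \subset \DC$ then forces $\Phi(z,t) \in \DD$ on a suitable neighborhood of infinity. Uniqueness of $\Phi$ is inherited from the uniqueness of $g$ in the Frobenius step. I expect the principal obstacle to lie in the third paragraph: extending $\Gamma$ holomorphically across $s=0$ requires genuine asymptotic control of $X$ as $\im z_j \to \infty$, beyond the algebraic Frobenius argument that yields the $\DC$-valued solution.
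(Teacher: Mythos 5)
The genuine gap is in your final paragraph. Openness of $\DD$ in $\DC$ does not let you conclude that $\Phi(z,t)=\exp(\sum_j z_jN_j)\cdot\bigl(\exp(\Gamma(s,t))\cdot F_0\bigr)$ lies in $\DD$: for each fixed $z$ the map $g\mapsto \exp(\sum_j z_jN_j)\cdot g\cdot F_0$ is continuous, so \emph{some} neighborhood of the identity is carried into $\DD$, but that neighborhood depends on $z$ and shrinks as $\im(z_j)\to\infty$, because $\exp(\sum_j z_jN_j)$ is unbounded there; meanwhile $\exp(\Gamma(s,t))\to\mathrm{id}$ only in that same limit. You are therefore in a race between the polynomial growth of $\exp(\sum_j z_jN_j)$ and the exponential decay $|s_j|=e^{-2\pi\im(z_j)}$ of the perturbation, and deciding it requires quantitative norm estimates near the boundary --- this is precisely the positivity assertion of \cite[Theorem 2.8]{ar:CK-luminy}, which ultimately rests on the several-variable $SL_2$-orbit theorem. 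As written, your argument would equally ``prove'' that any perturbation tending qualitatively to the identity yields a $\DD$-valued map, which is not a proof of anything; note also that $F_0$ itself is a boundary point of $\DD$, so proximity of $\exp(\Gamma)\cdot F_0$ to $F_0$ carries no positivity information on its own.

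For context: the paper gives no proof of this theorem; it is stated as a direct consequence of \cite[Theorem 2.8]{ar:CK-luminy} and \cite[Theorem 2.7]{ar:CF-asymptotics}, so the only honest comparison is with the content of those results. Your first three paragraphs are a fair reconstruction of that content: the Frobenius system $g^{-1}dg=dX_{-1}$ on the simply connected domain, its integrability being exactly \eqref{eq:integcond}, and the identification of the $\jgp_{-1}$-part of $\log g$ with $X_{-1}$ are all correct (modulo normalizing at a base point of $U^r\times\Delta^m$ rather than at the origin, which does not lie in $U^r$), and you correctly flag the holomorphic extension of $\Gamma$ across $s=0$ as nontrivial and defer it to the references. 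One caution there: bounding the degree of the polynomial dependence on $z_j=\frac{1}{2\pi i}\log s_j$ is not by itself progress, since a nonconstant polynomial in $\log s_j$ never extends across $s_j=0$; what the cited argument actually shows is that the logarithmic terms cancel in the recursion for the $\jgp_a$-components. Had you simply quoted the two cited theorems for both delicate points, your outline would match the paper's derivation; the invalid openness step is the one place where the proposal asserts something false.
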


The importance of this last theorem is that the information contained
in a degenerating PVHS is encoded in the data of a nilpotent orbit and
a holomorphic map satisfying the integrability
condition~\eqref{eq:integcond}.

%%%%

Last, we turn to the first order content of a PVHS. The analysis of
the differential of the period mapping at a point $F^*\in\DD$ led to
the following definition~\cite[\S 1.c]{ar:CGGH-IVHS}. An
\jdef{infinitesimal variation of Hodge structure} (IVHS) at
$F^*\in\DD$ consists of a pair $(T,\delta)$, where $T$ is a finite
dimensional vector space and $\delta\in\hom(T,(T_h\DC)_{F^*}) =
\hom(T,\lgc^{-1,1})$ such that $\im(\delta)$ is an abelian subspace of
$\lgc^{-1,1} \subset\lgc$. In other words:
\begin{gather}
  \delta:T \rightarrow \oplus_p \hom(H^{p,k-p},H^{p-1,k-p+1})
  \stext{ is linear}\\
  \delta(\xi_1)\circ \delta(\xi_2) = \delta(\xi_2)\circ \delta(\xi_1)
  \stext{for all} \xi_1,\xi_2\in T\label{eq:ivhs_abelian}\\
  Q(\delta(\xi)\psi,\eta)+Q(\psi,\delta(\xi)\eta)= 0 \stext{for all}
  \psi\in H^{p,k-p},\, \eta\in H^{p-1,k-p+1}.
\end{gather}
The $1$-forms that annihilate $T_h\DC$ generate the differential ideal
that is known as \jdef{Griffiths' exterior differential system}. It
turns out that, because of~\eqref{eq:ivhs_abelian}, all IVHS are
integral elements of that system. It is also known~\cite[Proposition
3.15]{ar:Mayer-coupled} that every integral element of Griffiths'
system can be integrated to a germ of an integral manifold of the
system which, in Hodge theoretic terms, says that all IVHS arise from
(germs of) PVHS.

%%%%%%%%%%%%%%%%%%%%%%%%%%%%%%%%%%%%%%%%%%%%%%%%%%%%%%%%%%%%

\section{Infinitesimal variations at infinity}
\label{sec:IVIs}

By analogy with the ``classical'' study of the first order behavior of
a PVHS at a point $F^*\in\DD$ via IVHS, we want to analyze the first
order behavior of a degenerating PVHS at a point $F^*\in\DC$, that is,
at infinity.

Suppose that $\Phi$ is a degenerating PVHS with nilpotent orbit
$\{N_1,\ldots, N_r;F^*\}$. We will study how the tangent spaces to the
image of $\Phi$ ---that is, the IVHSs associated to $\Phi$--- behave
as $\Phi$ degenerates at $F^*$.

We find the tangent spaces to the variation by computing $d\Phi_{w_0}$
for $w_0$ in a neighborhood $W$ of infinity. Following the description
given in~\cite[pages 17 and 18]{bo:griffiths-topics} we consider:
\begin{equation*}
  d\Phi_{w_0}:(TW)_{w_0}\rightarrow (T_h\DC)_{\Phi(w_0)} \subset
  \oplus_a \hom(\gr^{\Phi(w_0)}_a,\gr^{\Phi(w_0)}_{a-1}).
\end{equation*}
If $\{I^{*,*}\}$ is the bigrading associated to the limiting MHS of
$\Phi$ at $F^*\in\DC$, the subspaces $J^*=\oplus_q I^{*,q}\subset V$
form a grading of $F^*$.  Using the $\GC$-action on $V$ and the
form~\eqref{eq:phi_x} for $\Phi$, we define $L^* = \exp(X(w_0))J^*$, a
grading of $\Phi(w_0)$. There are then isomorphisms
$\hom(\gr^{\Phi(w_0)}_a,\gr^{\Phi(w_0)}_{a-1}) \simeq
\hom(L^a,L^{a-1}) \simeq \hom(J^a,J^{a-1})$, with the last isomorphism
being conjugation by $\exp(X(w_0))$. Putting together the different
identifications we have
\begin{equation*}
  d\Phi_{w_0}:(TW)_{w_0}\rightarrow \oplus_a \hom(J^a,J^{a-1})
  \subset \lgc.
\end{equation*}
  
We claim that under the last representation, $d\Phi = dX_{-1}$. To
prove the claim, we have to show that $d\Phi_{w_0}= \exp(X(w_0))
dX_{-1}|_{w_0} \exp(-X(w_0))$.

Observe that if $\pi_{J^a}$ denotes the projection from $V$ onto the
$J^a$ factor and, analogously, for the $L^*$ grading, $\pi_{L^a} =
e^{X(w_0)} \pi_{J^a} e^{-X(w_0)}$.

Define $\del_j = \pd{}{w_j}|_{w_0}$. In order to compute $d\Phi_{w_0}
(\del_j) (\exp(X(w_0)) v_0)$ for $v_0\in J^a$, following the
computation described on pages 17 and 18
of~\cite{bo:griffiths-topics}, a curve through $\exp(X(w_0)) v_0$ and
contained in $\exp(X(w))J^a$ for $w$ near $w_0$ is needed. Such a curve can
be constructed considering $\exp(X(w)) v_0$. All together:
\begin{gather*}
  \exp(-X(w_0)) d\Phi_{w_0} (\del_j) \exp(X(w_0)) v_0 =
  \exp(-X(w_0)) \pi_{L^{a-1}}(\del_j (\exp(X(w)) v_0)) \\
  =\pi_{J^{a-1}}(\exp(-X(w_0)) (\del_j (\exp(X(w)) v_0))) \\ =
  \pi_{J^{a-1}}(\exp(-X(w_0))(\del_j(\exp(X(w))) v_0)).
\end{gather*}
By~\eqref{eq:horiz}, we have $\exp(-X(w_0)) \del_j(\exp(X(w))) =
\del_j(X_{-1}) \in \jgp_{-1}$.  So that
\begin{equation*}
  \exp(-X(w_0)) d\Phi_{w_0} (\del_j) \exp(X(w_0)) v_0 = dX_{-1}|_{w_0}
  v_0
\end{equation*}
and the claim follows. Therefore, using~\eqref{eq:X-1_gamma},
\begin{equation*}
  \im d\Phi_{(s,t)} = \im dX_{-1}|_{(s,t)} =
  \vspan_\C\bigg(N_j+\pd{\Gamma_{-1}}{s_j}\bigg|_{(s,t)} 2\pi i s_j,
  \pd{\Gamma_{-1}}{t_l}\bigg|_{(s,t)} \text{ for all } j,l\bigg).
\end{equation*}

Since $s=0$ is an accumulation point of points where $d\Phi$ has
maximal rank, we can consider the limit of the corresponding tangent
spaces which, by the holomorphicity of $\Gamma_{-1}$ at $(0,0)$,
satisfy
\begin{equation*}
  \lim_{s\rightarrow 0, t\rightarrow 0}  \im d\Phi_{(s,t)} = 
  \vspan_\C\bigg(N_j, \pd{\Gamma_{-1}}{t_l}\bigg|_{s=0,t=0}
  \text{ for all } j,l\bigg).
\end{equation*}

If we let
\begin{equation}
  \label{eq:def_ga}
  \ga_\infty^\Phi = \vspan_\C\bigg(N_j, \pd{\Gamma_{-1}}{t_l}\bigg|_{s=0,t=0}
  \text{ for all } j,l\bigg),
\end{equation}
we have just seen that the IVHSs associated to $\Phi$ ---their images
in the corresponding Grassmanian--- have $\ga_\infty^\Phi$ as a limit
point. Since all those subspaces are abelian, we conclude that
$\ga_\infty^\Phi$ is also abelian.

Abstracting the features of $\ga_\infty^\Phi$ we arrive to the
following definition.
\begin{defn}
  \label{def:ivi}
  Given a period domain $\DD$, an \jdef{infinitesimal variation of
    Hodge structure at infinity\/} (\IVI) is a pair $(\{N_1,\ldots,N_r
  ; F^*\},\ga)$, where $\{N_1,\ldots,N_r ; F^*\}$ is a nilpotent orbit
  in $\DC$ and, for $\jgp_{-1}$ is defined by~\eqref{eq:def_pa},
  $\ga\subset \jgp_{-1}$ is an abelian subspace such that
  $\vspan_\C{(N_1,\ldots,N_r)}\subset \ga$. The dimension of
  an \IVI is the dimension of $\ga$.
\end{defn}

Our previous discussion can be extended now to the following result
relating \IVIs to degenerating PVHS.

\begin{theorem}\label{th:var_to_abel}
  Let $\Phi:U^r \times \Delta^m\rightarrow \DD$ be a degenerating PVHS
  with nilpotent orbit $\{N_1,\ldots,N_r ; F^*\}$. Also, let
  $\ga_\infty^\Phi$ be defined by~\eqref{eq:def_ga}, where
  $\Gamma_{-1}$ is the holomorphic function associated to $\Phi$
  by~\eqref{eq:phi_g}.  Then, $(\{N_1,\ldots,N_r ;
  F^*\},\ga_\infty^\Phi)$ is an \IVI. Moreover, every \IVI is of this
  form.
\end{theorem}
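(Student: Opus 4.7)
The plan is to prove the theorem in two directions. The forward direction, that $(\{N_1,\ldots,N_r;F^*\},\ga_\infty^\Phi)$ is an IVI, is essentially packaged by the discussion preceding the statement. I would just need to spell out three things. First, that $\ga_\infty^\Phi \subset \jgp_{-1}$: each $N_j$ lies in $\jgp_{-1}$ because it preserves $W_*$ shifted by $-2$ and $F^*$ shifted by $-1$, while the partial derivatives $\partial\Gamma_{-1}/\partial t_l$ land in $\jgp_{-1}$ since $\Gamma_{-1}$ is $\jgp_{-1}$-valued by construction. Second, $\vspan_\C(N_1,\ldots,N_r) \subset \ga_\infty^\Phi$ is immediate from~\eqref{eq:def_ga}. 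Third, $\ga_\infty^\Phi$ is abelian: the computation before the definition shows $\im d\Phi_{(s,t)}$ is abelian for all $(s,t)$ (since it is the image of a genuine IVHS, via the $\exp(X(s,t))$-conjugation that identifies $\im d\Phi_{(s,t)}$ with a subspace of $\lgc^{-1,1}$ at $\Phi(s,t)\in \DD$), and being abelian is a closed condition on the Grassmannian, so the limit $\ga_\infty^\Phi$ is abelian as well.

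For the converse, I would use Theorem~\ref{th:improved_2.8} as the main tool. Given an \IVI $(\{N_1,\ldots,N_r;F^*\},\ga)$, choose elements $Y_1,\ldots,Y_m\in \ga$ so that $\{N_1,\ldots,N_r,Y_1,\ldots,Y_m\}$ is a basis (or at least a spanning set) of $\ga$ extending the $N_j$. Define a holomorphic map
\begin{equation*}
  R:\Delta^r\times\Delta^m\rightarrow \jgp_{-1},\qquad R(s,t)=\sum_{l=1}^m t_l Y_l,
\end{equation*}
which clearly satisfies $R(0,0)=0$. Setting $X_{-1}(z,t) = \sum_j z_j N_j + R(s,t)$ with $s_j=e^{2\pi i z_j}$, we have $dX_{-1} = \sum_j N_j\, dz_j + \sum_l Y_l\, dt_l$, so
\begin{equation*}
  dX_{-1}\wedge dX_{-1} = \sum_{j<j'}[N_j,N_{j'}]\,dz_j\wedge dz_{j'} + \sum_{j,l}[N_j,Y_l]\,dz_j\wedge dt_l + \sum_{l<l'}[Y_l,Y_{l'}]\,dt_l\wedge dt_{l'}.
\end{equation*}
Since $\ga$ is abelian and the $N_j$, $Y_l$ all lie in $\ga$, every bracket above vanishes and the integrability condition~\eqref{eq:integcond} holds.

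Applying Theorem~\ref{th:improved_2.8}, I obtain a period mapping $\Phi$ with nilpotent orbit $\{N_1,\ldots,N_r;F^*\}$ and whose associated function $\Gamma_{-1}$ equals $R$. Then $\partial\Gamma_{-1}/\partial t_l|_{s=0,t=0} = Y_l$ for each $l$, so by~\eqref{eq:def_ga},
\begin{equation*}
  \ga_\infty^\Phi = \vspan_\C(N_1,\ldots,N_r,Y_1,\ldots,Y_m) = \ga,
\end{equation*}
completing the converse. The main conceptual point is recognizing that the definition of an IVI is engineered precisely so that the abelian hypothesis converts, via this choice of $R$ linear in $t$, into the integrability condition needed to invoke Theorem~\ref{th:improved_2.8}; no hard step remains beyond that translation.
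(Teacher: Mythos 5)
Your proposal is correct and follows essentially the same route as the paper: the forward direction is the discussion preceding the definition of an \IVI (with the abelian property obtained as a limit of abelian subspaces), and the converse chooses a basis of a complement of $\vspan_\C(N_1,\ldots,N_r)$ in $\ga$, builds $\Gamma_{-1}$ linear in $t$, verifies~\eqref{eq:integcond} from commutativity, and invokes Theorem~\ref{th:improved_2.8}. The only difference is that you spell out the wedge computation and the closedness of the abelian condition, which the paper leaves implicit.
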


\begin{proof}
  The previous discussion shows that $\ga_\infty^\Phi$ is an abelian
  subspace of $\lgc$ that, by construction, is contained in $\jgp_{-1}$
  and contains the nilpotent cone of the nilpotent orbit
  $\{N_1,\ldots, N_r;F^*\}$ associated with $\Phi$. Therefore,
  $(\{N_1,\ldots,N_r ; F^*\},\ga_\infty^\Phi)$ is an \IVI.

  Conversely, if $(\{N_1,\ldots,N_r;F^\cdot\},\ga)$ is an \IVI, let
  $\{B_1,\ldots, B_l\}$ be a basis of a complement of
  $\vspan_\C(N_1,\ldots,N_r)$ in $\ga$. Then define the map
  $\Gamma_{-1}:\Delta^r\times\C^l\rightarrow \jgp_{-1}$ by
  $\Gamma_{-1}(s,t)=\sum_{j=1}^l t_j B_j$.
  
  Define $X_{-1}(z,t)=\sum_{j=1}^r z_j N_j +\Gamma_{-1}(s,t)$, as
  in~\eqref{eq:X-1_gamma}. Then, since all the elements of $\ga$
  commute with each other, condition~\eqref{eq:integcond} holds so
  that, by Theorem~\ref{th:improved_2.8}, $X_{-1}$ extends to a
  degenerating PVHS $\Phi$ with the given nilpotent orbit.  Since
  $\pd{\Gamma_{-1}}{t_l} = B_l$, it follows that the given \IVI arises
  from $\Phi$.
\end{proof}

% \begin{remark}
%   As part of the previous analysis we found that, under the
%   corresponding identifications,
%   \begin{equation*}
%     d\Phi = d X_{-1}.
%   \end{equation*}
%   In fact, under this interpretation, the integrability
%   condition~\eqref{eq:integcond} says that $d X_{-1}$, and then
%   $d\Phi$, are Higgs fields.
% \end{remark}

\begin{remark}
  As part of the previous analysis we found that, under the
  corresponding identifications, $ d\Phi = d X_{-1}$. In fact, under
  this interpretation, the integrability
  condition~\eqref{eq:integcond} says that $d X_{-1}$, and then
  $d\Phi$, are Higgs fields.
\end{remark}

The notion of \IVI introduced above is richer than that of IVHS in
that it encodes information about the nilpotent orbit as well as the
holomorphic part of a degenerating PVHS. In the next section we will
illustrate this statement with several examples.

%%%%%%%%%%%%%%%%%%%%%%%%%%%%%%%%%%%%%%%%%%%%%%%%%%%%%%%%%%%%%

\section{Abelian subalgebras}
\label{sec:abelian_subalgebras}

IVHS have appeared in connection with several geometric problems
including Torelli theorems, the Noether-Lefshchetz theorems and the
Yukawa coupling (in physics!). Another application has been the study
of bounds on the dimension of variations of Hodge structure, as
started by J. Carlson in~\cite{ar:carlson-bounds_2}. In this section
we want to illustrate the notion of \IVI by contrasting some examples
and results with those available for IVHS.

The problem of classifying IVHS is quite hard. Still, the following
result holds (\cite[Theorem 1.6]{ar:CKT}, \cite[Theorem
4.15]{ar:Mayer-coupled}).
\begin{theorem}\label{th:CKTM}
  If $F^*\in\DD$ and $\ga\subset \lgc^{-1,1}\subset \lgc$ is an
  abelian subspace, then $\dim \ga \leq q(k,h)$, where $q$ is an
  explicit piecewise quadratic function of the weight $k$ and the
  Hodge numbers $h^{p,q}$. Furthermore, the bounds are sharp.
\end{theorem}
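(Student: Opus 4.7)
The plan is to reduce Theorem~\ref{th:CKTM} to a linear-algebra problem about commuting graded endomorphisms and bound the dimension by induction on the length of the Hodge structure. The starting point is that any $X\in\lgc^{-1,1}$ decomposes as a tuple $(X_p)$ of maps $X_p:H^{p,k-p}\to H^{p-1,k-p+1}$. The condition $X\in\lgc$, namely $Q(Xu,v)+Q(u,Xv)=0$, combined with the PHS orthogonality~(\ref{it:orthogonality}), identifies $X_{k-p+1}$ with a sign-adjusted $Q$-adjoint of $X_p$; hence the free data of $X$ lies in $\bigoplus_{2p>k}\hom(H^{p,k-p},H^{p-1,k-p+1})$, with the middle summand (when $k=2m$) constrained to the $Q$-self-adjoint subspace of $\hom(H^{m,m},H^{m-1,m+1})$.

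Under this identification, the abelian condition $[X,Y]=0$ in $\lgc$ becomes the system $X_p\circ Y_{p+1}=Y_p\circ X_{p+1}$ for all $p$ and all $X,Y\in\ga$. I would then pick a generic element $X^0\in\ga$; the commutation relations force every $X\in\ga$ to respect the kernel-complement decomposition $H^{p,k-p}=\ker(X^0_p)\oplus I_p$ for some chosen complement $I_p$. This splits $\ga$ into a ``generic-rank'' piece, whose dimension is directly computable from the ranks of the components of $X^0$, and a residual piece annihilated by $X^0_p$ on a distinguished subspace. The residual piece is itself abelian in the sense of a polarized Hodge substructure of strictly smaller length, so induction on the length applies.

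The base case is length one, where all compositions $X_p\circ Y_{p+1}$ vanish automatically and so $\ga$ can be all of $\hom(H^{p,k-p},H^{p-1,k-p+1})$, giving the bound $h^{p,k-p}\cdot h^{p-1,k-p+1}$. The inductive step assembles the two pieces identified above; the piecewise nature of $q(k,h)$ reflects case distinctions on which product of Hodge numbers dominates at each step, and a separate treatment of the $Q$-self-adjoint middle summand, which contributes on the order of $\binom{a+1}{2}$ rather than $a^2$ when $h^{m,m}=a$. Sharpness is obtained by exhibiting explicit adapted-basis constructions where the commutation relations hold trivially because the relevant compositions vanish on compatible subspaces. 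The main obstacle is the inductive upper bound: producing a kernel-image decomposition tight enough that the bound is \emph{exact}, and matching it to the piecewise formula $q(k,h)$; this bookkeeping is the technical heart of the arguments in~\cite{ar:CKT} and~\cite{ar:Mayer-coupled}.
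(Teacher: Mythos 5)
First, note that the paper offers no proof of Theorem~\ref{th:CKTM}: it is imported verbatim from \cite[Theorem 1.6]{ar:CKT} and \cite[Theorem 4.15]{ar:Mayer-coupled}, so your attempt must be measured against those sources rather than against anything in this text. Your opening reduction is the right one, and is indeed how \cite{ar:CKT} begins: an element $X\in\lgc^{-1,1}$ is a tuple $X_p\in\hom(H^{p,k-p},H^{p-1,k-p+1})$, the infinitesimal isometry condition pairs $X_p$ with $X_{k-p+1}$ as sign-adjusted $Q$-adjoints, and commutativity becomes $X_p\circ Y_{p+1}=Y_p\circ X_{p+1}$. But you have the parity of the self-adjointness constraint backwards: $p=k-p+1$ has a solution only for odd $k=2m+1$, where the self-paired free component is $X_{m+1}\in\hom(H^{m+1,m},H^{m,m+1})$; this is what produces the $\binom{a+1}{2}$-type count (e.g.\ $g(g+1)/2$ in weight one). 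For even $k=2m$ there is no self-constrained middle component: $X_m$ is simply determined by $X_{m+1}$, and the factor $\tfrac12$ in the weight-two formula $\tfrac12 h^{2,0}h^{1,1}$ quoted in this paper arises from the interaction of commutativity with the adjointness pairing, not from a self-adjoint summand. Following your scheme literally would miscount in both parities.

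Second, the inductive step has a genuine hole. Commuting graded maps do preserve kernels and images in the graded sense ($Y^0_p(X_{p+1}v)=X_p(Y^0_{p+1}v)=0$ for $v\in\ker Y^0_{p+1}$), but they do not preserve an arbitrarily chosen complement $I_p$ of $\ker(X^0_p)$; the assertion that every $X\in\ga$ ``respects the kernel-complement decomposition'' is false in general and cannot carry the induction. The argument in \cite{ar:CKT} instead analyzes the restriction map $\ga\to\hom(H^{k,0},H^{k-1,1})$ and bounds its image and kernel separately, which is where the delicate bookkeeping lives. Your base case is also off: for a two-step (odd-weight) structure the polarization already cuts the relevant $\hom$ space down to its symmetric part, so the bound is not $h^{p,k-p}\cdot h^{p-1,k-p+1}$. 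Since you explicitly defer ``the technical heart'' to \cite{ar:CKT} and \cite{ar:Mayer-coupled}, what you have is a correct framing of the linear-algebra problem plus an inductive skeleton whose pivotal step fails as stated, not a proof.
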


In the same setting, R. Mayer generalized partial results of
Carlson~\cite{ar:carlson-bounds_2} to the effect that, except for some
small dimensional cases, maximal dimensional abelian subalgebras
generate rigid variations~\cite[Theorem 5.1]{ar:Mayer-coupled}.

Our first observation is that \IVIs satisfy the bounds of
Theorem~\ref{th:CKTM}. Indeed, by Theorem~\ref{th:var_to_abel}, any
\IVI can be integrated to a PVHS of the same dimension. Moreover, we
will show below that there are \IVIs of the maximal dimension.

\begin{prop}\label{prop:CKTM_bound_via_IVI}
  For weight $k=2$ and any Hodge numbers $h^{*,k-*}$, there are \IVIs
  whose dimension is $q(k,h)$.
\end{prop}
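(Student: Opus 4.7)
The strategy is to realize the CKTM bound $q(2,h)$ by an \IVI with non-trivial nilpotent orbit, thereby strengthening the existence statement of Theorem~\ref{th:CKTM} from the IVHS setting to the \IVI setting. Theorem~\ref{th:CKTM} already supplies a maximal abelian subspace $\ga_0\subset\lgc^{-1,1}$ at some PHS $F_0\in\DD$ of dimension $q(2,h)$; what is needed is to transport such a configuration to the \IVI setting by exhibiting a nilpotent orbit $\{N_1,\ldots,N_r;F^*\}$ together with an abelian $\ga\subset\jgp_{-1}(F^*)$ of dimension $q(2,h)$ containing $\vspan_\C(N_1,\ldots,N_r)$.

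First, I would exploit the explicit weight-$2$ description of $\lgc^{-1,1}$: an element $X\in\lgc^{-1,1}$ is determined by $A:=X|_{H^{2,0}}\colon H^{2,0}\to H^{1,1}$, with its $H^{1,1}\to H^{0,2}$ component given by $-A^*$ under the polarization, and the condition $[X_1,X_2]=0$ reduces to the matrix identity $A_1^*A_2=A_2^*A_1$ (the other graded components of the bracket vanish automatically because $\lgc^{-1,1}$ kills $H^{0,2}$). Thus the CKTM problem for weight $2$ is a linear-algebra question on $\hom(H^{2,0},H^{1,1})$, and $\ga_0$ corresponds to a subspace of this $\hom$-space of dimension $q(2,h)$.

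Second, I would construct an $\R$-split PMHS $(W_*,F^*)$ of weight $2$ with a real nilpotent $N$ such that $\jgp_{-1}(F^*)$ admits an abelian subspace of dimension $q(2,h)$ containing $N$. In the $\R$-split case the Deligne bigrading satisfies $\conj{I^{p,q}}=I^{q,p}$, so $I^{-1,-1}(\lgc)$ has a natural real structure, and $\jgp_{-1}=\oplus_q I^{-1,q}(\lgc)$ admits a description parallel to that of $\lgc^{-1,1}$ at a PHS, via $\hom$-spaces attached to graded pieces of $W_*$. An analogue of the commutation identity $A_1^*A_2=A_2^*A_1$ governs abelianness in $\jgp_{-1}$, so the same linear-algebra construction producing $\ga_0$ produces an abelian $\ga\subset\jgp_{-1}$ of dimension $q(2,h)$; by starting from $N$ and completing to a maximal abelian subspace, one arranges $N\in\ga$. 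Then $(\{N;F^*\},\ga)$ is the desired \IVI.

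The main obstacle is this second step: exhibiting an $\R$-split PMHS with $N$ placed inside a CKTM-maximal abelian subspace of $\jgp_{-1}$. The weight-$2$ degenerations fall into three classes, distinguished by the ranks of $N$ and $N^2$, and in each class the graded structure of $V$ together with its $\hom$-spaces is explicit enough that the requisite $N$ can be produced by direct construction. The restriction to $k=2$ seems essential: the commutation rule takes the transparent form $A_1^*A_2=A_2^*A_1$ only in this weight, so the argument does not immediately generalize to higher $k$, where the abelianness condition mixes several $\hom$-factors and new obstructions may arise.
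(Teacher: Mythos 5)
Your overall strategy is the same as the paper's: build an $\R$-split PMHS, describe $\jgp_{-1}$ via hom-spaces between the pieces of the Deligne bigrading, and produce an abelian subspace of dimension $q(2,h)$ containing the nilpotent cone. Your first step (the weight-$2$ reduction of abelianness in $\lgc^{-1,1}$ to $A_1^*A_2=A_2^*A_1$ on $\hom(H^{2,0},H^{1,1})$) is correct and is indeed the CKT mechanism. But the second step, which you yourself identify as the main obstacle, contains a genuine gap, and it is precisely where all the content of the proposition lives. You propose to ``start from $N$ and complete to a maximal abelian subspace'' of $\jgp_{-1}$. A maximal abelian subspace need not have maximal dimension, and more importantly the maximal dimension of an abelian subspace of $\jgp_{-1}$ containing $N$ depends strongly on which limiting MHS (i.e., which bigrading dimensions $j^{a,b}$) you chose and on where $N$ sits. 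The paper's own Table~\ref{tab:all_examples} shows this failure concretely: for $k=2$, $h^{2,0}=h^{1,1}=3$ one has $q(2,h)=4$, yet for four of the six admissible limiting MHS every \IVI has dimension at most $3$. So for most choices of $(W_*,F^*,N)$ your completion step simply cannot reach $q(2,h)$, and no argument you give selects a choice for which it can.

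What the paper actually does, and what your plan omits, is the explicit selection of the bigrading: e.g., for $h^{1,1}$ odd and $2h^{2,0}>h^{1,1}-1$ it takes $j^{2,1}=\tfrac{1}{2}(h^{1,1}-1)$, $j^{2,0}=h^{2,0}-\tfrac{1}{2}(h^{1,1}-1)$, $j^{1,1}=1$, builds $\ga=\ga_1\oplus\ga_2\oplus\C\{\phi\}$ from specific hom-spaces $\hom(J^{2,1},J^{1,0})$, $\hom(J^{2,0},J^{1,0})$, $\hom(J^{2,0},J^{1,1})$, verifies the commutation pattern component by component (abelianness in $\jgp_{-1}$ is not governed by a single identity of the form $A_1^*A_2=A_2^*A_1$; it mixes several graded hom-factors even in weight $2$), counts $(j^{2,1})^2+j^{2,0}j^{2,1}+1=q(2,h)$, and observes that the cone lies in $I^{-1,-1}\lgc\subset\ga_1$ automatically. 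A second bigrading is needed when $2h^{2,0}\leq h^{1,1}-1$, and a third for $h^{2,0}=1$. Your classification ``by the ranks of $N$ and $N^2$'' is a reasonable organizing principle, but until you exhibit, in each class, a concrete bigrading together with an abelian subspace of the exact dimension $q(2,h)$ containing $N$, the proposition is not proved. As written, the proposal asserts the conclusion of the hard step rather than establishing it.
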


\begin{remark}
  For simplicity, we are stating
  Proposition~\ref{prop:CKTM_bound_via_IVI} only for $k=2$. The result
  can be proved for arbitrary weight using the same techniques, as it
  is described in Remark~\ref{re:CKTM_bound_via_IVI}.
\end{remark}

Before we start the proof of Proposition~\ref{prop:CKTM_bound_via_IVI}
we will make explicit the bounds of
Theorem~\ref{th:CKTM}~\cite[Theorem 1.6]{ar:CKT}, in the case $k=2$.
\begin{enumerate}
\item $h^{2,0}>1: q(2,h)=
  \begin{cases}
    \frac{1}{2} h^{2,0} (h^{1,1}-1) + 1 \text{, if $h^{1,1}$ is odd},\\
    \frac{1}{2} h^{2,0} h^{1,1} \text{, if $h^{1,1}$ is even}.
  \end{cases}$
\item $h^{2,0}=1: q(2,h)=h^{1,1}$.
\end{enumerate}

The following technical results are needed to prove
Proposition~\ref{prop:CKTM_bound_via_IVI}.

\begin{lemma}\label{le:orthogonal_implies_rest}
  Let $V$ be a vector space underlying a PHS of weight $k$ with
  polarizing form $Q$ and Hodge numbers $h^{*,k-*}$. Suppose that
  $\{J^{*,*}\}$ is a bigrading of $V$ and $j^{a,b} = \dim J^{a,b}$ so
  that the following properties are satisfied:
  \begin{enumerate}
  \item \label{it:construction_1} $J^{a,b} = \conj{J^{b,a}}$ for all
    $a,b$.
  \item \label{it:construction_2} $j^{a,b} = j^{k-b,k-a}$ for all
    $a,b$.
  \item \label{it:construction_3} $h^{p,k-p} = \sum_{b} j^{p,b}$ for
    all $p$.
  \item \label{it:construction_4} $j^{a+1,b+1}\leq j^{a,b}$ for all
    $a,b$ with $a+b\geq k$.
  \item \label{it:construction_5} $Q(J^{a,b},J^{a',b'})=0$ unless
    $a+a'=k$ and $b+b'=k$.
  \end{enumerate}
  Then, if $F^p = \oplus_{a\geq p ,b} J^{a,b}$, and $W_l =
  \oplus_{a+b\leq l} J^{a,b}$, there exist $N\in \lgr$ such that
  $(W[-k]_*,F^*,N)$ is a PMHS.
\end{lemma}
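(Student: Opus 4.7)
The plan is to extract from properties (\ref{it:construction_2}) and (\ref{it:construction_4}) an $\jgsl_2$-action on $V$ whose lowering operator is the desired $N$, and then to verify the PMHS axioms one at a time. First, combining properties (\ref{it:construction_2}) and (\ref{it:construction_4}) shows that on each ``diagonal'' of the bigrading consisting of the pieces $J^{a+i,\,b+i}$ at fixed $a-b$, the integers $j^{a+i,b+i}$ form a unimodal sequence symmetric about the midpoint where $(a+i)+(b+i)=k$.  This is precisely the dimension pattern of a direct sum of $\jgsl_2$-weight strings, so one can select linear maps $N|_{J^{a,b}}\colon J^{a,b}\to J^{a-1,b-1}$ that realize $N$ as the lowering operator of an $\jgsl_2$-triple $(N^+,Y,N)$ with $Y$ acting on $J^{a,b}$ by $a+b-k$.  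In particular $N^l\colon J^{a,b}\to J^{a-l,b-l}$ is an isomorphism whenever $a+b=k+l$, which is exactly what forces $W(N)[-k]_* = W[-k]_*$ and $N^{k+1}=0$.  Reality of $N$ is arranged via property (\ref{it:construction_1}) by pairing conjugate diagonals and extending $N$ compatibly; the condition $N\in\lgr$ (i.e.\ $Q$-antisymmetry) is then enforced by property (\ref{it:construction_5}), which determines $N|_{J^{k-a+1,k-b+1}}$ from $N|_{J^{a,b}}$ through the nondegenerate pairing between $J^{a,b}$ and $J^{k-a,k-b}$.

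Next, I verify that $(W[-k]_*,F^*)$ is a MHS and check the $F$-orthogonality axiom.  Property (\ref{it:construction_1}) exhibits $\{J^{a,b}\}$ as a real-split bigrading compatible with the filtrations $F^*$ and $W_*$, which by the (converse direction of) Deligne's characterization of MHS bigradings gives that $(W[-k]_*,F^*)$ is a split MHS; property (\ref{it:construction_3}) matches the Hodge numbers of the graded pieces with those of the ambient PHS.  The orthogonality $Q(F^a,F^{k-a+1})=0$ is immediate from property (\ref{it:construction_5}): for $u\in J^{c,d}\subset F^a$ and $v\in J^{c',d'}\subset F^{k-a+1}$ one has $c\geq a$ and $c'\geq k-a+1$, so $c+c'\geq k+1$, placing the pair outside the only admissible case for $Q(u,v)\neq 0$.

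The main obstacle is the polarization of the primitive parts.  With the $N$ constructed above, the primitive subspace $P_l\subset \gr^{W[-k]}_{k+l}$ decomposes into its Hodge pieces $P_l^{p,k+l-p}$, each identified via the bigrading with a subspace of some $J^{a,b}$ with $a+b=k+l$ (the ``highest weight'' part of an $\jgsl_2$-string).  By property (\ref{it:construction_5}) and the $\jgsl_2$-structure, the form $Q(\cdot,N^l\cdot)$ restricts to a nondegenerate Hermitian pairing on each $P_l^{p,k+l-p}$.  The delicate step is obtaining the correct positivity sign $i^{p-q}$: I would use property (\ref{it:construction_3}) to match each primitive Hodge piece with the ambient piece $H^{p,k-p}$, and transfer the given positivity of $Q(C\cdot,\conj{\cdot})$ along this matching, using the fact that the action of $N^l$ on bidegrees is controlled by property (\ref{it:construction_2}).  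Making this transfer precise, likely by equipping each $\jgsl_2$-isotypic component with a compatible real structure refining the one on $V$, is where the core work of the argument lies.
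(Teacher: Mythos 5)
Your outline coincides with the approach of the paper's own proof, which is itself only a sketch: reality and splitness of the MHS come from condition~\ref{it:construction_1}, the existence of an $N$ inducing the prescribed weight filtration comes from conditions~\ref{it:construction_2}--\ref{it:construction_4} (your $\jgsl_2$-string/unimodality argument is the standard way to make that precise), and the compatibility with $Q$ comes from condition~\ref{it:construction_5}. Your first two paragraphs are correct and are in fact more detailed than what the paper records.

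The one step where your proposed mechanism is off is the positivity of the primitive forms. You suggest using condition~\ref{it:construction_3} to ``match each primitive Hodge piece with the ambient piece $H^{p,k-p}$'' and to transfer the positivity of $Q(C\cdot,\conj{\cdot})$ along that matching. But the bigrading $\{J^{*,*}\}$ bears no canonical relation to the Hodge decomposition $H^{*,k-*}$ of the ambient PHS: condition~\ref{it:construction_3} is an equality of dimensions, not a correspondence of subspaces, so there is nothing to transfer along. What the hypothesis that $V$ underlies a PHS with Hodge numbers $h^{*,k-*}$ actually provides is the signature of $Q$, and combined with conditions~\ref{it:construction_1}, \ref{it:construction_3} and~\ref{it:construction_5} this pins down the signature of $Q$ on each self-conjugate, $Q$-nondegenerate block $J^{a,b}\oplus J^{b,a}\oplus J^{k-a,k-b}\oplus J^{k-b,k-a}$. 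The positivity of $Q(\cdot,N^l\conj{\cdot})$ on the primitive parts is then not transferred but \emph{engineered} through the choice of $N$: the constraint $N\in\lgr$ determines $N$ on the lower half of each $\jgsl_2$-string from its values on the upper half, and the signs on the primitive pieces come out right precisely because the block signatures are the ones forced by the Hodge numbers. A rank-one illustration: for $k=2$ and $j^{2,2}=j^{1,1}=j^{0,0}=1$ with $h^{2,0}=h^{1,1}=h^{0,2}=1$, the total signature $(1,2)$ of $Q$ forces $Q$ to be negative definite on $J^{1,1}_\R$, and writing $Ne=\alpha f$, $Nf=\beta g$ with $Q(e,g)=1$, $Q(f,f)=-1$, the relation $Q(Ne,f)+Q(e,Nf)=0$ gives $\beta=\alpha$ and hence $Q(e,N^2e)=\alpha^2>0$; had $Q$ been positive on $J^{1,1}_\R$ the sign would fail for every admissible $N$. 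So the ``core work'' you correctly identify as remaining is genuine, but it should be organized around the signature of $Q$ on the $J$-blocks rather than around a matching with $H^{*,k-*}$.
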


\begin{proof}
  This is only a sketch: the details are an exercise in linear
  algebra. First notice that, by condition~\ref{it:construction_1},
  $(W_*,F^*)$ defines a MHS split over $\R$. Then,
  conditions~\ref{it:construction_1}, \ref{it:construction_2},
  \ref{it:construction_3} and \ref{it:construction_4} imply the
  existence of $N\in\gl(V_\R)$ such that $W_* = W(N)_*$ and $N$ is a
  $(-1,-1)$ morphism of the MHS. Last,
  condition~\ref{it:construction_5} implies that $N$ can be chosen in
  $\lgr$ and so that $(W[-k]_*,F^*,N)$ is a PMHS.
\end{proof}

\begin{lemma}\label{eq:dimensions_implies_rest}
  For any weight, $k$, and Hodge numbers, $h^{*,k-*}$, let
  $j^{a,b}\in\Z_{\geq 0}$ be such that
  conditions~\ref{it:construction_2}, \ref{it:construction_3} and
  \ref{it:construction_4} in Lemma~\ref{le:orthogonal_implies_rest}
  hold. Then there are bigradings $\{J^{*,*}\}$ of $V$ with $j^{a,b} =
  \dim J^{a,b}$ such that the rest of the conditions of
  Lemma~\ref{le:orthogonal_implies_rest} hold.
\end{lemma}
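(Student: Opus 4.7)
\emph{Plan.} My plan is to build a model triple $(\tilde V, \tilde Q, \{\tilde J^{a,b}\})$ satisfying conditions~\ref{it:construction_1}--\ref{it:construction_5} of Lemma~\ref{le:orthogonal_implies_rest} with the prescribed dimensions $j^{a,b}$, and then transfer the bigrading to $V$ via a $Q$-isometry coming from the classification of polarized real vector spaces.

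First I would observe that the conjugation condition~\ref{it:construction_1} forces the extra numerical symmetry $j^{a,b}=j^{b,a}$, which combined with the hypothesized~\ref{it:construction_2} gives full invariance of $j^{a,b}$ under the Klein-four group $G=\langle\sigma,\tau\rangle$, where $\sigma(a,b)=(b,a)$ and $\tau(a,b)=(k-a,k-b)$. The index set then partitions into $G$-orbits of size $1$, $2$, or $4$, and I would build $\tilde V$ as a $\tilde Q$-orthogonal direct sum of one ``block'' per $G$-orbit. For a generic size-$4$ orbit $\{(a,b),(b,a),(k-a,k-b),(k-b,k-a)\}$ the block is a $4 j^{a,b}$-dimensional complex space carrying a standard hyperbolic form, split as four $\tilde Q$-isotropic subspaces of dimension $j^{a,b}$ assigned to the four indices so that $\tilde Q$ pairs $(a,b)$ with $(k-a,k-b)$ (and $(b,a)$ with $(k-b,k-a)$) and complex conjugation realizes $\sigma$. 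Size-$2$ ``diagonal'' orbits ($a=b\neq k/2$) give smaller hyperbolic blocks; size-$2$ ``anti-diagonal'' orbits ($a+b=k$, $a\neq b$) and the size-$1$ orbit $(k/2,k/2)$ (when $k$ is even) give blocks built as small sub-PHS of the appropriate weight, chosen so that $\tilde Q$ realizes the positivity required of a PHS polarization.

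By~\ref{it:construction_3}, the induced Hodge numbers of $\tilde V$ match those of $V$, so $(\tilde V,\tilde Q)\cong(V,Q)$ as polarized real vector spaces (the isomorphism class being determined by the Hodge numbers via the signature of $\tilde Q$ for $k$ even, and by dimension plus the positivity of the induced Hermitian form $\tilde Q(C_H\cdot,\overline{\cdot})$ for $k$ odd). Pulling back $\{\tilde J^{a,b}\}$ along such an isometry gives the required bigrading of $V$. Condition~\ref{it:construction_4} enters only as a consistency check ensuring that the $j^{a,b}$ are realizable by an honest bigrading compatible with a weight filtration, which is automatic from the block construction.

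The \textbf{main obstacle} is the size-$4$ block construction: arranging four subspaces simultaneously $\tilde Q$-isotropic, pairwise $\tilde Q$-dual in exactly the $\tau$-prescribed pairs, and interchanged by complex conjugation along $\sigma$. This reduces to writing down a standard hyperbolic form on $\C^{4 j^{a,b}}$ with a compatible real structure and to bookkeeping which pairings vanish and which are non-degenerate; once this is in hand, the orbit-wise assembly and the final transport via Witt's theorem are routine.
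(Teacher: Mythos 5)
Your overall architecture is sound and is essentially the one the paper itself gestures at: the paper gives no standalone proof of this lemma, but the explicit construction in the proof of Proposition~\ref{prop:CKTM_bound_via_IVI} (splitting $V$ into a definite piece, a hyperbolic piece carved into conjugate isotropic subspaces, and a one-dimensional piece) is exactly an instance of your orbit-by-orbit block decomposition. Your observation that the data must also satisfy $j^{a,b}=j^{b,a}$ (forced by condition~\ref{it:construction_1} of Lemma~\ref{le:orthogonal_implies_rest}, though not listed among the hypotheses) is a fair point, implicitly assumed in the remark following the lemma.

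There is, however, a genuine gap at the transfer step, and it sits precisely where you declare condition~\ref{it:construction_4} to be ``only a consistency check\ldots automatic from the block construction.'' For $k$ even, $(V_\R,Q)$ is classified by the signature of $Q$, which is not a free parameter: the positivity condition on the PHS pins it down as $\bigl(\sum_{a\equiv k/2\,(2)}h^{a,k-a},\ \sum_{a\not\equiv k/2\,(2)}h^{a,k-a}\bigr)$ up to an overall sign convention. Meanwhile your size-$4$ blocks and diagonal size-$2$ blocks force a split (hyperbolic) contribution to the signature of $\tilde Q$, and only the anti-diagonal and central blocks are adjustable. You must therefore prove that the split part demanded by those orbits fits inside the split part of $\mathrm{sig}(Q)$, with the correct parity left over for the adjustable blocks; this is the real content of the lemma and is exactly what conditions~\ref{it:construction_3} and~\ref{it:construction_4} buy you. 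Concretely, take $k=4$ and $j^{4,2}=j^{2,4}=j^{2,0}=j^{0,2}=1$ with all other $j^{a,b}=0$: conditions~\ref{it:construction_2} and~\ref{it:construction_3} hold with $h^{4,0}=h^{0,4}=1$, $h^{2,2}=2$, $h^{3,1}=h^{1,3}=0$, but condition~\ref{it:construction_4} fails ($j^{4,2}>j^{3,1}$), $Q$ is definite, and no isotropic $J^{4,2}$ can exist --- so your model has nothing to map to. Hence condition~\ref{it:construction_4} is doing essential work that your argument never invokes; until you supply the signature bookkeeping (for instance via the primitive dimensions $p^{a,b}=j^{a,b}-j^{a+1,b+1}\ge 0$ for $a+b\ge k$ and a Hodge--Riemann-type count), the claimed isometry $(\tilde V,\tilde Q)\cong(V,Q)$ is unjustified. (For $k$ odd the form is symplectic and the transfer is indeed free; also, the final identification uses Sylvester's classification of real forms rather than Witt's extension theorem, a minor point.)
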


\begin{remark}
  Combining Lemmas~\ref{le:orthogonal_implies_rest}
  and~\ref{eq:dimensions_implies_rest} we see that it suffices to set
  dimensions satisfying adequate compatibility conditions to ensure
  the existence of PMHS with bigrading of dimensions given by the
  given data.
  
  Notice that by the symmetry conditions, it is sufficient to set
  compatible values of $j^{a,b}$ for $a+b\geq k$ and $a\geq b$.  In
  what follows, we will usually set the values of some $j^{a,b}$, with
  the others determined either by symmetry or, otherwise, are $0$.
\end{remark}

\begin{proof}[Proof of Proposition~\ref{prop:CKTM_bound_via_IVI}]
  
  We start with the case $h^{2,0}>1$.

  Suppose $h^{1,1}$ is odd. There are two possibilities to consider:
  \begin{itemize}
  \item $2 h^{2,0}>h^{1,1}-1$. By
    Lemma~\ref{eq:dimensions_implies_rest}, given dimensions $j^{2,1}
    = \frac{1}{2}(h^{1,1}-1)$, $j^{2,0} = h^{2,0} -
    \frac{1}{2}(h^{1,1}-1)$, $j^{1,1} = 1$ there are MHS $\{J^{*,*}\}$
    with the right dimensions, polarized by some $N$. As an
    illustration of the ideas used in the proof of
    Lemma~\ref{eq:dimensions_implies_rest}, we will construct the
    bigrading $\{J^{*,*}\}$ explicitly. $Q$ has signature
    $(2h^{2,0},h^{1,1})$ so that we can split $V=V_1\oplus V_2 \oplus
    V_3$ with $\dim V_1 = 1$, $\dim V_2 = 2(h^{1,1}-1)$ and $\dim V_3
    = 2 h^{2,0}-h^{1,1}+1$ and so that the signature of $Q|_{V_i}$ is
    $(0,1)$, $(h^{1,1}-1, h^{1,1}-1)$ and $(2 h^{2,0}-h^{1,1}+1,0)$
    respectively. Split $V_2 = I_1 \oplus I_2$ with $I_j$ real and
    isotropic, $\dim I_j= h^{1,1}-1$. Notice that $Q$ induces $I_2
    \simeq I_1^*$. Write $I_1 = K_1 \oplus \conj{K_1}$, and under the
    previous isomorphism $I_2 = K_2 \oplus \conj{K_2}$ where
    $K_2\simeq K_1^*$. Finally, $V_3 = W \oplus \conj{W}$ with $W$
    isotropic, $\dim W = h^{2,0}-\frac{1}{2}(h^{1,1}-1)$. Now define
    $J^{1,1}=V_1$, $J^{2,1} = K_1$, $J^{1,2} = \conj{K_1}$,
    $J^{1,0}=\conj{K_2}$, $J^{0,1} = K_2$, $J^{2,0} = W$ and $J^{0,2}=
    \conj{W}$. Then the bigrading $\{J^{*,*}\}$ satisfies the
    conditions of Lemma~\ref{le:orthogonal_implies_rest} so that it
    induces a PMHS.
    
    Any map $\ti{X}\in\hom(J^{2,1},J^{1,0})$ can be extended to a map
    $X\in\jgp_{-1}$ ($\jgp_{*}$ as defined by~\eqref{eq:def_pa}) such
    that $X|_{J^{1,2}}\in \hom(J^{1,2}, J^{0,1})$ is dual to $\ti{X}$,
    using condition~\ref{it:construction_5} in
    Lemma~\ref{le:orthogonal_implies_rest}, and $X$ vanishes elsewhere.
    The same argument shows that any $\ti{\phi}\in
    \hom(J^{2,0},J^{1,1})$ and $\ti{\psi}\in \hom(J^{2,0},J^{1,0})$
    extend to maps $\phi,\psi\in \jgp_{-1}$. All of this may be
    schematized as follows:
    \begin{equation}\label{eq:max_type}
      \xymatrix{& J^{2,1}\ar[dd]^(.3){X} & &
        J^{1,2}\ar[dd]^(.3){X}\ar[dr]^{\psi} & \\ J^{2,0}
        \ar[rr]^(.3){\phi} \ar[dr]^{\psi} & &
        J^{1,1}\ar[rr]^(.3){\phi} & & J^{0,2} \\ & J^{1,0} & & J^{0,1}
        &}
    \end{equation}
    Let $\ga_1$ and $\ga_2$ be respectively the spaces of all the maps
    $X$ and $\psi$ constructed as above. Clearly $\ga_1$ and $\ga_2$
    are abelian. Moreover, any map $\phi$ commutes with $\ga_1 \oplus
    \ga_2$. For a fixed $\phi\neq 0$, define the abelian subspace $\ga
    = \ga_1 \oplus \ga_2 \oplus \C \{\phi\} \subset \jgp_{-1}$. We have
    $\dim \ga = \dim \ga_1 +\dim \ga_2 + 1 = (j^{2,1})^2 + j^{2,0}
    j^{2,1} + 1 = h^{2,0}(\frac{h^{1,1}-1}{2})+1$.
    
    Any $N\in J^{-1,-1}\lgc$ automatically satisfies $N\in \ga_1$.
    For a given $N$ which polarizes the MHS under consideration, by
    Theorem~\ref{th:2.3}, there are nilpotent orbits
    $\{N_1,\ldots,N_r;F^*\}$ such that $N$ is in the relative interior
    of $C(N_1,\ldots,N_r)$. Since all $N_j\in J^{-1,-1}\lgc$,
    $N_j\in\ga_1$. Thus $(\{N_1,\ldots,N_r;F^*\},\ga)$ is an \IVI.

    Notice that in this case we can have a nilpotent cone of maximal
    dimension $(j^{2,1})^2$.
    
  \item $2 h^{2,0}\leq h^{1,1}-1$. Consider a PMHS whose bigrading
    satisfies $j^{2,1}= h^{2,0}$, $j^{1,1}=h^{1,1}-2h^{2,0}$.  As in
    the previous case, any $\ti{X}\in\hom(J^{2,1},J^{1,0})$ and
    $\ti{\phi}\in \hom(J^{2,1},J^{1,1})$ extend to maps $X,\phi \in
    \jgp_{-1}$. This is described by:
    \begin{equation*}
      \xymatrix{J^{2,1} \ar[dd]^{X}\ar[rd]^{\phi} & &
        J^{1,2}\ar[dd]^{X} \\ & J^{1,1} \ar[rd]^{\phi} & \\ J^{1,0} &
        & J^{0,1}}
    \end{equation*}
    Since $Q|_{J^{1,1}}$ is positive definite, we can choose a
    splitting $J^{1,1}=L\oplus K \oplus \conj{K}$ where $\dim L=1$,
    $K$ is isotropic and $L$ and $K$ are orthogonal.
    
    Let $\ga_1$ and $\ga_2$ be respectively the spaces of maps $X$ and
    $\tau$ generated by $\ti{X}\in\hom(J^{2,1},J^{1,0})$ and
    $\ti{\tau}\in \hom(J^{2,1},K)$. Then $\ga_1\oplus\ga_2$ is
    abelian. Also, any fixed map $\ti{\psi}\in\hom(J^{2,1},L)$ induces
    a map $\psi\in \jgp_{-1}$ that commutes with $\ga_1\oplus\ga_2$.
    Thus $\ga= \ga_1\oplus\ga_2 \oplus \C\{\psi\}\subset \jgp_{-1}$ is
    an abelian subspace with $\dim \ga = \dim \ga_1 + \dim \ga_2 +1 =
    (j^{2,1})^2 + \frac{1}{2} j^{2,1}(j^{1,1}-1) +1 =
    \frac{1}{2} h^{2,0}(h^{1,1} - 1) + 1$.

    The construction of the \IVI then concludes as in the previous
    case. 
  \end{itemize}

  The case with $h^{1,1}$ even is done along the same lines as
  $h^{1,1}$ odd.
  
  Now suppose that $h^{2,0}=1$. If $h^{1,1}\geq 2$, we consider a PMHS
  whose bigrading has $j^{2,1}=1$, $j^{1,1}=h^{1,1}-2$ and is
  polarized by $N$:
  \begin{equation*}
    \xymatrix{ J^{2,1}\ar[dd]^{N}\ar[rr]^\psi \ar[dr]^\phi & &
      J^{1,2}\ar[dd]^{N} \\ & J^{1,1}\ar[dr]^\phi & \\ 
      J^{1,0}\ar[rr]^\psi & & J^{0,1} }
  \end{equation*}
  Then, there is a space of dimension $h^{1,1} -2$ of maps $\phi$,
  together with $\C\{N\}$ and $\C\{\psi\}$ for any fixed $\psi\neq 0$.
  They all commute, making an abelian space of dimension $h^{1,1}$, as
  required.

  The cases when $h^{2,0}=1$ and $h^{1,1}<2$ are immediate but notice
  that there is no logarithmic part (\ie, $r=0$).
\end{proof}

\begin{remark}
  In a few places during in the proof of
  Proposition~\ref{prop:CKTM_bound_via_IVI} we picked a map among many
  possible choices. For instance, in the case shown in
  diagram~\eqref{eq:max_type}, we chose a map $\phi$ to enlarge the
  abelian subspace $\ga_1\oplus \ga_2$. One may ask if it could be
  possible to enlarge the resulting abelian subspace even more by
  adding other such maps. On general grounds, the answer is no, since
  the dimension of the resulting abelian subspace would have to
  satisfy the CKTM bounds, and the examples constructed in the proof
  are of maximal dimension. More explicitly, in the case of
  diagram~\eqref{eq:max_type}, if $\phi'$ is like $\phi$ and commutes
  with $\phi$, it is easy to check that $\phi'\in \C\{ \phi\}$, so
  that further enlargement is not possible.
\end{remark}

\begin{remark}\label{re:CKTM_bound_via_IVI}
  The proof of Proposition~\ref{prop:CKTM_bound_via_IVI} in arbitrary
  weight follows along similar lines. Indeed, the proof of the
  sharpness of the bound in~\cite{ar:CKT} is made by showing, for each
  set of Hodge numbers, a specific IVHS of the maximal dimension. This
  IVHS is constructed out of four types of basic examples, combined
  appropriately by direct sums and tensor products. Hence, it suffices
  to show that each one of these basic types can be realized by a
  \IVI. In the proof of Proposition~\ref{prop:CKTM_bound_via_IVI}, we
  introduced two of the four basic types needed. The other two are
  constructed similarly and analogous results for direct sums and
  tensor products complete the argument.
\end{remark}

The maximal dimension of IVHS depends, by Theorem~\ref{th:CKTM}, on
the weight $k$ and the Hodge numbers of the structures.
Proposition~\ref{prop:CKTM_bound_via_IVI} shows that by taking
appropriate nilpotent orbits and abelian spaces, it is always possible
to achieve the maximal dimension given by the CKTM bounds on a given
period domain $\DD$ with \IVIs. The next example will show that, for a
given period domain, the maximal dimension problem for \IVIs with a
given underlying MHS can be more stringent.

\begin{example}\label{ex:all_ivi}
  Here we describe all the classes of \IVIs arising as degenerations
  of PVHS of weight $k=2$ and $h^{2,0} = h^{1,1}=3$. 

  Table~\ref{tab:all_examples} shows the different possible cones and
  the maximal dimension of the \IVIs in each case.
 
  \begin{table}[htbp]
    \begin{center}
      \leavevmode
      \begin{tabular}{|c|c|c|}\hline
        $j^{*,*}$ & nilpotent cones & max dim of \IVI \\\hline
        $j^{2,0}=j^{1,1}=3$ & $\{0\}$ & $4$ \\\hline 
        $j^{2,1}=j^{1,1}=1$ and $j^{2,0}=2$ & 1 cone of dimension 1 &
        $4$ 
        \\\hline 
        $j^{2,2}=1$, $j^{2,0}=2$ and $j^{1,1}=3$ & cones
        of dimension 1, 2 and 3 & in all cases $3$ \\\hline 
        $j^{2,2}=j^{1,1}=j^{2,1}=j^{2,0}=1$ & cones of
        dimension 1 and 2 & in all cases $3$ \\\hline 
        $j^{2,2}=2$, $j^{2,0}=1$ and $j^{1,1}=3$ & cones of
        dimension 1, 2 and 3 & in all cases $3$ \\\hline 
        $j^{2,2}=j^{1,1}=3$ & cones of
        dimension 1, 2 and 3 & in all cases $3$ \\\hline
      \end{tabular}
      \caption{MHS, nilpotent cones and \IVIs obtained when $k=2$ and
        $h^{2,0}=h^{1,1}=3$}
      \label{tab:all_examples}
    \end{center}
  \end{table}
\end{example}

\begin{remark}
  The first two rows of Table~\ref{tab:all_examples} correspond to MHS
  where the maximal dimension of \IVIs coincides with the CKTM
  bound. The remaining rows correspond to MHS where the maximal
  dimension of \IVIs is smaller, for all nilpotent cones.
\end{remark}

It is conceivable that the notion of \IVI could be attached to that of
MHS rather than to the nilpotent orbit as we do. The following example
will show that for a given MHS, the maximal dimension of the \IVIs
still depends on the full nilpotent orbit.
\begin{example}
  Consider the MHS in weight $k=2$, defined by $j^{2,2} = j^{1,1} =
  2d$.  As in the previous constructions, any $\ti{\phi} \in
  \hom(J^{2,2},J^{1,1})$ induces a map $\phi\in\jgp_{-1}$.  The
  condition for the commutativity of any two such morphisms becomes
  $\transpose{\ti{\phi}}_2 \ti{\phi}_1 = \transpose{\ti{\phi}}_1
  \ti{\phi}_2$, where $\transpose{\ti{\phi}}$ is the dual map of
  $\ti{\phi}$ under $Q$. Fix real bases of $V$ where the bilinear form
  $Q$ is given by $\left(
    \begin{matrix}
      & & \idM_{2d}\\ & -\idM_{2d}& \\ \idM_{2d} & &\\
    \end{matrix}\right)$. With respect to such bases, the matrix of
  $\transpose{\ti{\phi}}$ is the transpose of the matrix of
  $\ti{\phi}$.  For $1\leq a \leq 2d$, define $\ti{N}_a \in
  \hom(J^{2,2},J^{1,1})$ with respect to the same bases as above, by
  the matrices $\ti{N}_a = E_{a,a}$ whose only nonzero entry is a 1 in
  the $(a,a)$-position. Define also $N_0= \sum_{a=1}^{2d} N_a$.
  Clearly $N_0$ polarizes the MHS $\{J^{*,*}\}$. Then,
  $\{N_0;J^{*,*}\}$ and $\{N_1,\ldots,N_{2d};J^{*,*}\}$ are nilpotent
  orbits whose associated MHS is $\{J^{*,*}\}$.
  
  Now, we want to find the maximal dimension of the \IVIs for these
  nilpotent orbits. 
  \begin{enumerate}

  \item $\{N_1,\ldots,N_{2d};J^{*,*}\}$. In this case, the commutation with
    all the $N_a$ forces the elements of the abelian subspace
    containing the cone to be given by diagonal matrices in
    $\C^{2d\times 2d}$. So, any maximal abelian subspace has, at most,
    dimension $2d$. Therefore, the maximal dimension of \IVIs with the
    given nilpotent orbit is $2d$.

  \item $\{N_0;J^{*,*}\}$. In this case, the abelian subspaces
    containing the nilpotent cone are simply those containing the
    identity matrix. This condition forces the matrices representing
    $\ti{\phi}$ to be symmetric. In particular, the subspace of all
    the matrices of the form
    \begin{equation*}
      \left(
      \begin{matrix}
        a\idM+ iB & B \\ B & a\idM-iB
      \end{matrix}
      \right)
    \end{equation*}
    for $B \in \C^{d\times d}$ symmetric and $a\in\C$, is abelian,
    contains the identity matrix and has dimension
    $\frac{1}{2}d(d+1)+1$. So, the maximal dimension of \IVIs having
    the given nilpotent orbit is, at least, $\frac{1}{2}d(d+1)+1$. It
    will follow from Proposition~\ref{prop:max_dim_symmetric_matrices}
    that this is, in fact, the maximal dimension.
  \end{enumerate}
  Finally, since for $d\geq 3$, $\frac{1}{2}d(d+1)+1 > 2d$, we
  conclude that the maximal dimension depends on the nilpotent orbit
  and not just on the MHS.
\end{example}

\begin{remark}
  In order to study the maximal dimension of \IVIs whose underlying
  MHS is fixed, it is enough to consider one dimensional variations,
  that is, nilpotent orbits with nilpotent cone generated by one
  element. This is so since if $(\{N_1,\ldots,N_r;F^*\},\ga)$ is an
  \IVI and $N=\sum a_j N_j$ then $\{N;F^*\}$ is a nilpotent orbit
  with the same underlying MHS, so that $(\{N;F^*\},\ga)$ is an \IVI
  of the same dimension with a one dimensional nilpotent cone.
\end{remark}

The general problem of finding the maximal dimension of \IVIs with a
given nilpotent orbit is quite complex. Below, we concentrate on one
particular case to see some of its features.

Consider the following MHS of weight $k$ of Hodge-Tate type polarized
by $N_0$:
\begin{equation}\label{eq:PMHS-HT}
  \xymatrix{{J^{k,k}} \ar[d]^{N_0} \\ {J^{k-1,k-1}}\ar[d]^{N_0} \\
  {\vdots} \ar[d]^{N_0}\\ {J^{1,1}} \ar[d]^{N_0} \\ {J^{0,0}}}
\end{equation}
with $\dim J^{a,a}=n$ for $0\leq a\leq k$. We will denote this
structure by $\{J,N_0\}$.

First notice that any map $\phi\in \jgp_{-1}$ commuting with the
polarizing $N_0$ is completely determined by $\ti{\phi} =
\phi|_{J^{k,k}}$. Indeed, since for $0\leq a<k$,
$N_0|_{J^{a+1,a+1}}:J^{a+1,a+1}\rightarrow J^{a,a}$ is an isomorphism,
given $v^{a}\in J^{a,a}$ (for $0\leq a < k$) there exists $v^k\in
J^{k,k}$ such that $v^{a} = N_0^{k-a}(v^k)$. Therefore, $\phi(v^a) =
\phi(N_0^{k-a} (v^k)) = N_0^{k-a}(\phi(v^k))$.

The problem can be phrased in terms of matrices. In order to do that,
notice that $J^{k,k}$ is a pure Hodge structure of weight $2k$
polarized by $Q_k(\cdot,\cdot) = Q(\cdot ,N_0^k(\cdot))$ which is
symmetric nondegenerate and positive definite (on the real vector
space underlying $J^{k,k}$). Then, there is a $Q_k$-orthonormal basis
$\mathcal{B}_k=\{v^k_1,\ldots v^k_n\}$ of $J^{k,k}$. Using $N_0$ we
define $\mathcal{B}_a=\{v^a_1,\ldots,v^a_n\}$, where
$v^a_j=N_0^{k-a}(v^k_j)$ for $a=k-1,\ldots, 0$ and
$j=1,\ldots,n$. The set $\mathcal{B}=\cup_{a=0}^k \mathcal{B}_j$ is a
(real) basis of $\oplus_a J^{a,a}$. If
$[\ti{\phi}]_{\mathcal{B}_k,\mathcal{B}_{k-1}}$ is the matrix of
$\ti{\phi}$ with respect to the corresponding bases, the conditions
$\phi\in\lgc$ and commutativity with $N_0$ become that
$[\ti{\phi}]_{\mathcal{B}_k,\mathcal{B}_{k-1}} \in \C^{n\times n}$ is
symmetric, while commutativity of any two morphisms becomes
commutativity of the respective matrices in the standard way.

In this case, the maximal dimension problem for any $k$ and
$h^{a,k-a}= n$ for $0\leq a\leq k$ reduces to that of finding maximal
dimensional abelian subalgebras of symmetric matrices in $\gl(n,\C)$.

A first simplification comes from writing $\gl(n,\C) = \jgsl(n,\C)
\oplus \C$ and restricting to the $\jgsl$ part. The bound over $\gl$
will be $1$ unit higher and is realized, for example, by the direct
sum of a subalgebra maximizing dimension in $\jgsl$ and the linear
span of the identity matrix. Using the Cartan decomposition
$\jgsl(n,\R) = \jgk \oplus \jgp$ that can be interpreted as the
decomposition of trace zero matrices as the sum of antisymmetric and
symmetric matrices, we reduce the maximal dimension problem for
symmetric matrices to that of finding the maximal dimensional abelian
subalgebras of $\jgsl(n,\C)$ contained in $\jgp$. That maximal
dimension has been obtained by J. Carlson and D. Toledo using root
system techniques. They conclude in~\cite[\S 6]{ar:CT-rigidity} that
this maximal dimension (for subalgebras in $\jgp$) is
\begin{equation*}
  \begin{cases}
    \frac{1}{2}\alpha(\alpha+1) + \beta \text{ for } n = 2\alpha
    +\beta>1 \text{ and } \beta=0,1.\\
    0 \text{ for } n=1.
  \end{cases}
\end{equation*}
Furthermore, they show that for even $n$ all maximal dimensional
abelian subalgebras are conjugate while for odd $n$, there are two
conjugacy classes. All together we have the following result.

\begin{prop}\label{prop:max_dim_symmetric_matrices}
  Let $\{N_0;J\}$ be the nilpotent orbit of weight $k$
  of~\eqref{eq:PMHS-HT}, with $\dim J^{p,p} = n$ for $0\leq p\leq k$.
  Then, the maximal dimension of any \IVI $(\{N_0;J\},\ga)$ is
  \begin{equation*}
    \begin{cases}
      \frac{1}{2}\alpha(\alpha+1) + \beta + 1 \text{ for } n = 2\alpha
      +\beta>1 \text{ and } \beta=0,1.\\
      1 \text{ for } n=1.
    \end{cases}
  \end{equation*}
  Furthermore, up to conjugation, there is only one maximal
  dimension \IVI for $n$ even and two for $n$ odd.
\end{prop}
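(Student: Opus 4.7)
The plan is to carry out the reduction to symmetric matrices already sketched in the paragraphs preceding the statement and then invoke the Carlson--Toledo classification from~\cite{ar:CT-rigidity}.

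I would first make the dictionary between \IVIs of the form $(\{N_0;J\},\ga)$ and abelian subalgebras of symmetric matrices in $\C^{n\times n}$ completely explicit. Any $\phi\in\jgp_{-1}$ commuting with $N_0$ is determined by its restriction $\ti{\phi}=\phi|_{J^{k,k}}$, and in the $Q_k$-orthonormal basis $\mathcal{B}_k$ the matrix $[\ti{\phi}]_{\mathcal{B}_k,\mathcal{B}_{k-1}}$ is symmetric. Under this identification $N_0$ itself corresponds to the identity $\idMatrix{n}$, since $v^{k-1}_j=N_0(v^k_j)$ by construction of the basis. Hence the requirement $N_0\in\ga$ translates into $\idMatrix{n}\in A$, where $A\subset\C^{n\times n}$ is the image of $\ga$ under the above dictionary.

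Next, I would exploit the splitting $\gl(n,\C)=\jgsl(n,\C)\oplus\C\cdot\idMatrix{n}$. Any abelian subalgebra $A$ of symmetric matrices that contains $\idMatrix{n}$ decomposes uniquely as $A=A_0\oplus\C\cdot\idMatrix{n}$ with $A_0=A\cap\jgsl(n,\C)$ an abelian subalgebra of traceless symmetric matrices; conversely, adjoining $\C\cdot\idMatrix{n}$ to any such $A_0$ remains abelian. This reduces the problem to bounding $\dim A_0$ and then adding $1$. The Carlson--Toledo theorem quoted above gives $\dim A_0\leq \tfrac{1}{2}\alpha(\alpha+1)+\beta$ when $n=2\alpha+\beta>1$ with $\beta\in\{0,1\}$, and $\dim A_0=0$ when $n=1$. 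Adding the identity contribution yields exactly the stated formula, and in the $n=1$ case the unique maximal \IVI is $\ga=\C\cdot N_0$.

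For the conjugacy count I would transfer the Carlson--Toledo classification of maximal abelian subalgebras of $\jgp\subset\jgsl(n,\C)$ (one class for $n$ even, two for $n$ odd) to \IVIs. The key equivariance observation is that the subgroup of $\GR$ preserving the orbit data $\{N_0;J\}$ acts on the symmetric matrix presentation by real orthogonal changes of $\mathcal{B}_k$, because any such element must preserve $J^{k,k}$ together with the positive definite real form $Q_k$ on it; since $\C\cdot\idMatrix{n}$ is fixed by every conjugation, two maximal subalgebras $A=A_0\oplus\C\cdot\idMatrix{n}$ are conjugate precisely when the corresponding $A_0$ are. The main obstacle I expect is exactly this last step: verifying carefully that the notion of conjugacy relevant to \IVIs matches the $SO(n)$-action underlying Carlson--Toledo's classification, so that the two classes for odd $n$ neither collapse into one nor split further at the \IVI level. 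Once that equivariance is pinned down, the dimension bound and the enumeration of conjugacy classes both transfer with essentially no additional work.
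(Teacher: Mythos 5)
Your proposal is correct and follows essentially the same route as the paper: the reduction to abelian subalgebras of symmetric matrices via restriction to $J^{k,k}$ in a $Q_k$-orthonormal basis, the splitting off of $\C\cdot\idM$ to pass to traceless symmetric matrices, and the appeal to the Carlson--Toledo classification in $\jgp\subset\jgsl(n,\C)$ for both the dimension bound and the count of conjugacy classes. The only difference is that you flag and propose to verify the equivariance matching \IVI-conjugacy with the orthogonal action in Carlson--Toledo, a point the paper leaves implicit.
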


\begin{remark}
  It is easy to show either using Lie algebra theory or as a nice
  elementary computation~\cite[Theorem 5.5]{proc:CT-integralManifolds}
  that the maximal dimension of abelian subspaces of symmetric
  elements of $\gl(n,\R)$ is $n$. This result implies that the maximal
  dimension of the nilpotent cone for this MHS is $n$.
\end{remark}

We close with a comment regarding future work.  Carlson observed that
the maximal dimensions given by Theorem~\ref{th:CKTM} seem to be much
larger than the naturally occurring PVHS: for instance, hypersurface
variations are maximal PVHS of smaller
dimension~\cite{ar:CD-hypersurface_variations}. A question remains as
to what are the extra conditions that characterize the ``more
natural'' PVHS~\cite{ar:carlson-bounds_2}. It would be very
interesting to see if the \IVIs play a role in this respect since they
provide a finer classification than the IVHS and so could be linked to
specific degenerating behavior of the ``more natural'' PVHS.

%%%%%%%%%%%%%%%%%%%%%%%%%%%%%%%%%%%%%%%%%%%%%%%%%%%%%%%%%%
%\bibliography{math}
%%%%%%%%%%%%%%%%%%%%%%%%%%%%%%%%%%%%%%%%%%%%%%%%%%%%%%%%%%

\def\cprime{$'$}
\providecommand{\bysame}{\leavevmode\hbox to3em{\hrulefill}\thinspace}
\providecommand{\MR}{\relax\ifhmode\unskip\space\fi MR }
% \MRhref is called by the amsart/book/proc definition of \MR.
\providecommand{\MRhref}[2]{%
  \href{http://www.ams.org/mathscinet-getitem?mr=#1}{#2}
}
\providecommand{\href}[2]{#2}

%%%%%%%%%%%%%%%%%%%%%%%%%%%%%%%%%%%%%%%%%%%%%%%%%%%%%%%%%%

\end{document}